
\documentclass{amsproc}
\usepackage{amssymb}
\usepackage{amsfonts}

\setcounter{MaxMatrixCols}{10}

\theoremstyle{plain}

\newtheorem{corollary}{Corollary}

\newtheorem{definition}{Definition}

\newtheorem{lemma}{Lemma}

\newtheorem{proposition}{Proposition}
\newtheorem{remark}{Remark}

\newtheorem{theorem}{Theorem}
\numberwithin{equation}{section}

\begin{document}
\title[Smoothness of orbital measures]{The smoothness of orbital measures on
noncompact symmetric spaces}
\author{Sanjiv Kumar Gupta}
\address{Dept. of Mathematics \\
Sultan Qaboos University\\
P.O.Box 36 Al Khodh 123\\
Sultanate of Oman}
\email{gupta@squ.edu.om}
\author{Kathryn E. Hare}
\address{Dept. of Pure Mathematics\\
University of Waterloo\\
Waterloo, Ont.,~Canada\\
N2L 3G1}
\email{kehare@uwaterloo.ca}
\thanks{This research is supported in part by NSERC 2016-03719 and by Sultan
Qaboos University. The authors thank Acadia University for their hospitality
when this research was done. }
\subjclass[2000]{Primary 43A90; Secondary 43A85, 22E30}
\keywords{symmetric space, orbital measure, spherical function}
\thanks{This paper is in final form and no version of it will be submitted
for publication elsewhere.}

\begin{abstract}
Let $G/K$ be an irreducible symmetric space where $G$ is a non-compact,
connected Lie group and $K$ is a compact, connected subgroup. We use decay
properties of the spherical functions to show that the convolution product
of any $r=r(G/K)$ continuous orbital measures has its density function in $%
L^{2}(G)$ and hence is an absolutely continuous measure with respect to Haar
measure. The number $r$ is approximately the rank of $G/K$. For the special
case of the orbital measures, $\nu _{a_{i}}$, supported on the double cosets 
$Ka_{i}K$ where $a_{i}$ belongs to the dense set of regular elements, we
prove the sharp result that $\nu _{a_{1}}\ast \nu _{a_{2}}\in L^{2},$ except
for the symmetric space of Cartan type $AI$ when the convolution of three
orbital measures is needed (even though $\nu _{a_{1}}\ast \nu _{a_{2}}$ is
absolutely continuous).
\end{abstract}

\maketitle

\section{Introduction}

Let $G$ be a real, connected, noncompact, semisimple Lie group with finite
center, and $K$ a maximal compact subgroup of $G$. The quotient space, $G/K,$
is a symmetric space of noncompact type, which we also assume to be
irreducible. For $a\in G\diagdown N_{G}(K),$ we let $\nu _{a}$ denote the $K$%
-bi-invariant, orbital, singular measure supported on the compact double
coset $KaK$ in $G$. The smoothness properties of convolution products of
these orbital measures has been of interest for many years and is related to
questions about products of double cosets and spherical functions. Ragozin,
in \cite{Ra}, proved that for $r\geq $ $\dim G/K$, the convolution product
measure, $\nu _{a_{1}}\ast \cdots \ast \nu _{a_{r}},$ is absolutely
continuous with respect to any Haar measure on $G$, equivalently, its
density function is a compactly supported function in\ $L^{1}(G)$. This was
improved in a series of papers, culminating with \cite{GSFunc} and \cite%
{GHJMAA17}, where $r$ was reduced to either $rankG/K$ or $rankG/K+1$
depending on the Lie type. See \cite{GS1} for a good history of this problem.

For the special case of regular elements, $a_{j},$ it was shown in \cite%
{AReg} that the density function of $\nu _{a_{1}}\ast \cdot \cdot \cdot \ast
\nu _{a_{r}}$ belongs to the smaller space of compactly supported functions
in $L^{2}(G)$ for $r\geq $ $\dim G/K+1$. The decay properties of spherical
functions and the Plancherel theorem were used to prove this. In this paper,
we develop a more refined analysis of the decay properties of spherical
functions, using the classification of these symmetric spaces in terms of
their restricted root systems, to significantly improve this result. This
analysis allows us to both extend the $L^{2}$ result to convolutions of 
\textit{all }orbital measures $\nu _{a}$ for $a\notin N_{G}(K)$, as well as
to reduce the number of convolution products to approximately $rankG/K;$ the
precise values are given in Section 4 and depend only on the Lie and Cartan
type of the symmetric space. In the special case of convolution products of
orbital measures at regular elements, we prove that $r=2$ suffices, except
for one symmetric space (Cartan type $AI$ of rank one) where $r=3$ is both
necessary and sufficient. This latter fact shows that, unlike the situation
for the analogous problem in compact Lie groups and algebras, it is not true
that $\nu _{a}^{k}$ belongs to $L^{2}$ if and only if $\nu _{a}^{k}$ is
absolutely continuous (where the exponent means convolution powers). The
decay properties are also applied to study the differentiability of orbital
measures.

\section{Notation and Basic Facts}

\subsection{Lie algebra set up}

Let $G$ be a real, connected, non-compact, semisimple Lie group with finite
centre and let $K$ be a maximal compact subgroup of $G$ fixed by the Cartan
involution $\theta $. We assume that $G/K$ is irreducible. The quotient
space, $G/K$, is a symmetric space of non-compact type III in Helgason's
terminology, \cite{He}. Let $\mathfrak{g=t\oplus p}$ be the corresponding
Cartan decomposition of the Lie algebra $\mathfrak{g}$ of $G,$ where $%
\mathfrak{t}$ is the Lie algebra of $K$ and $\mathfrak{p}$ is the orthogonal
complement of $\mathfrak{t}$ with respect to the Killing form of $\mathfrak{g%
}$. We fix a maximal abelian (as a subalgebra of $\mathfrak{g}$) subspace $%
\mathfrak{a}$ of $\mathfrak{p}$ and let $\mathfrak{a}^{\ast }$ denote its
dual. The rank of $G/K$ is the dimension of $\mathfrak{a}$. If we put $%
A=\exp \mathfrak{a}$ where $\exp $: $\mathfrak{g}\rightarrow G$ is the
exponential function, then $G=KAK$.

The set of restricted roots, $\Phi ,$ is defined by 
\begin{equation*}
\Phi =\{\alpha \in \mathfrak{a}^{\ast }:\mathfrak{g}_{\alpha }\neq 0\}
\end{equation*}%
where $\mathfrak{g}_{\alpha }$ are the root spaces. The multiplicity of the
restricted root $\alpha $ will be denoted 
\begin{equation*}
m_{\alpha }=\dim \mathfrak{g}_{\alpha }.
\end{equation*}
The subset of positive restricted roots is denoted $\Phi ^{+}$. The set $%
\Phi $ is a root system, although not necessarily reduced as it is possible
for both $\alpha $ and $2\alpha $ to be in $\Phi $.

Take a basis $\mathcal{B}$ for $\mathfrak{a}^{\ast }$ consisting of positive
simple roots and let $\mathfrak{a}^{+}$ be the elements $H\in \mathfrak{a}$
with $\alpha (H)>0$ for all $\alpha \in \mathcal{B}$. Similarly, let $%
\mathcal{D}\subseteq \mathfrak{a}$ be the dual basis to $\mathcal{B}$ and
let 
\begin{equation*}
\mathfrak{a}^{\ast +}=\{\lambda \in \mathfrak{a}^{\ast }:\lambda (H)>0\text{ 
}\forall H\in \mathcal{D\}}.
\end{equation*}%
We have $\mathfrak{a}^{\ast }=\bigcup_{w\in W}w(\overline{\mathfrak{a}%
^{\ast +}})$ for $W$ equal to the Weyl group, with a similar statement
holding for $\mathfrak{a}$.

Consequently, $G=K\overline{\exp \mathfrak{a}^{+}}K$. Indeed, given any $%
g\in G,$ there is a pair $k_{1},k_{2}\in K$ and a unique $X_{g}\in \overline{%
\mathfrak{a}^{+}}$ such that $g=k_{1}(\exp X_{g})k_{2}$. We can thus view $%
\lambda \in \mathfrak{a}^{\ast }$ as also acting on $g\in A$ by setting $%
\lambda (g)=\lambda (X_{g})$.

The symmetric spaces can be classified by their Cartan class and the Lie
type of their restricted root system, these being one of types $A_{n},$ $%
B_{n},$ $C_{n},$ $BC_{n},$ and $D_{n}$ (the classical types) or $G_{2},F_{4},
$ $E_{6},$ $E_{7},E_{8}$ (the exceptional types), the subscript in all cases
being the rank of the symmetric space. We remark that for types $B_{n},C_{n}$
we may assume $n\geq 2$ as the symmetric spaces of Lie types $B_{1}$ and $%
C_{1}$ are isomorphic to type $A_{1}$. Similarly, with $D_{n}$ we may assume 
$n\geq 4$. For more details, please see the appendix.

For further background on this material and proofs of the facts stated above
we refer the reader to \cite{Hediff}-\cite{Hu}.

\subsection{Orbital measures}

Next, we introduce the orbital measures of interest in this paper. We let $%
dm $ denote normalized Haar measure on $K$.

\begin{definition}
Let $a\in A$. By an \textbf{orbital measure on }$G,$ we mean the measure
denoted $\nu _{a},$ defined by the rule%
\begin{equation*}
\int_{G}f(g)d\nu _{a}(g)=\int_{K}\int_{K}f(k_{1}ak_{2})dm(k_{1})dm(k_{2})
\end{equation*}%
for all continuous, compactly supported functions $f$ on $G$.
\end{definition}

The orbital measure $\nu _{a}$ is the $K$-bi-invariant, probability measure
supported on the compact, double coset $KaK\subseteq G$. Orbital measures
are always singular with respect to Haar measure on $G$ and they are
continuous measures (i.e., have no atoms) when $a\notin N_{G}(K)$, the
normalizer of $K$ in $G$.

It is a classical problem to study the smoothness of convolution products of
continuous orbital measures. Some of the earliest work was done by Ragozin
in \cite{Ra} who showed that $\nu _{a_{1}}\ast \cdot \cdot \cdot \ast \nu
_{a_{r}}$ is absolutely continuous if and only if the product of double
cosets, $Ka_{1}Ka_{2}\cdot \cdot \cdot Ka_{r}K,$ has non-empty interior in $%
G $. He, then, used geometric arguments to prove that the latter statement
was true whenever $r\geq \dim G/K$. Using algebraic methods, this was
subsequently improved to $r\geq rankG/K+1$ by Graczyk and Sawyer in \cite%
{GSFunc}, who also showed that this was sharp in the case of non-compact
symmetric spaces with restricted root systems of type $A_{n}$. Inspired by
Graczyk and Sawyer's work in \cite{GSLie} and \cite{GSAus}, in \cite%
{GHJMAA17} the authors proved that $r\geq rankG/K$ is the sharp $L^{1}$
result for all the classical non-compact symmetric spaces except those of
type $A_{n},$ and characterized precisely which convolution products are
absolutely continuous for the classical types.

\subsection{ $L^{1}-L^{2}$ Dichotomy}

Similar smoothness questions have been explored in a number of related
settings, including $K$-bi-invariant measures supported on double cosets in
compact symmetric spaces $G/K$, invariant measures supported on conjugacy
classes of compact Lie groups or $Ad$-invariant measures supported on
adjoint orbits of compact Lie algebras. In the case of compact Lie groups
and algebras, the authors in \cite{GHDich} and \cite{GHMathZ} used a
combination of harmonic analysis and geometric arguments to show that
convolution powers of such measures belong to $L^{1}$ if and only they
belong to $L^{2},$ and determined the sharp exponent for each such measure.
In contrast, in \cite{AGP} it was shown that this dichotomy fails to hold in
the compact symmetric space $SU(2)/SO(2)$.

The harmonic analysis approach to the $L^{2}$ problem for compact Lie groups
involved studying the rate of decay of the characters of the group and
applying the Plancherel theorem. For symmetric spaces, the analogous
approach is to study, instead, the decay of the spherical transform. We
recall the definition of the spherical function and spherical transform.

\begin{definition}
The \textbf{spherical transform} of a compactly supported measure $\nu $ on
the non-compact Lie group $G$ is defined by 
\begin{equation*}
\widehat{\nu }(\lambda )=\int_{G}\phi _{\lambda }(g^{-1})d\nu (g)
\end{equation*}%
where $\phi _{\lambda }$ is the \textbf{spherical function} corresponding to 
$\lambda \in \mathfrak{a}^{\ast }$ given by the expression%
\begin{equation*}
\phi _{\lambda }(g)=\int_{K}\exp ((i\lambda -\rho )\mathcal{H}(gk))dm(k).
\end{equation*}%
Here $\rho $ is half the sum of the positive roots and $\mathcal{H}$ is the
Iwasawa projection, i.e., $\mathcal{H(}gk)$ is the unique element in $%
\mathfrak{a}$ such that $gk\in K\exp \mathcal{H}(gk)N$ where $N$ is a Lie
subgroup of \ $G$ with Lie algebra $\mathfrak{n=}\sum_{\alpha \in \Phi ^{+}}%
\mathfrak{g}_{\alpha }$.
\end{definition}

This formula for the spherical function can be found in \cite[IV, Thm. 4.3]%
{He} where it is also seen that $\phi _{\lambda }=\phi _{w(\lambda )}$ for
all $w\in W$ and $\lambda \in \mathfrak{a}^{\ast }$.

From the definition of orbital measures it is easy to see that $\widehat{\nu
_{a}}(\lambda )=\phi _{\lambda }(a^{-1}),$ while in \cite{AReg} it is shown
that 
\begin{equation*}
(\nu _{a_{1}}\ast \cdot \cdot \cdot \ast \nu _{a_{r}})\widehat{}(\lambda
)=\prod_{i=1}^{r}\phi _{\lambda }(a_{i}^{-1}).
\end{equation*}


A version of Plancherel's theorem holds in this setting. For the remainder
of the paper, $c=c(\lambda )$ is the Harish-Chandra $c$ function and $%
d\lambda $ denotes Lebesgue measure on $\mathfrak{a}^{\ast }$.

\begin{theorem}
(Plancherel) (see \cite[IV Thm. 9.1]{He}) The $K$-bi-invariant measure $\mu $
belongs to $L^{2}(G)$ if and only if%
\begin{equation*}
\left\Vert \mu \right\Vert _{L^{2}(G)}^{2}=\int_{\mathfrak{a}^{\ast
}}\left\vert \widehat{\mu }(\lambda )\right\vert ^{2}\left\vert c(\lambda
)\right\vert ^{-2}d\lambda <\infty .
\end{equation*}
\end{theorem}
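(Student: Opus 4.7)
The plan is to reduce the statement about measures to the classical spherical Plancherel formula for $K$-bi-invariant $L^{2}$ functions on $G$, which is the deep analytic core. Since $\mu$ is $K$-bi-invariant, both sides of the claimed identity depend only on the class of $\mu$ on $K\backslash G/K$, so after a standard reduction it suffices to treat $K$-bi-invariant elements of $L^{2}(G)$.

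The first major step is to establish the spherical inversion formula for $K$-bi-invariant Schwartz functions $f$ on $G$, namely
\[
f(g)=\frac{1}{|W|}\int_{\mathfrak{a}^{\ast}}\widehat{f}(\lambda)\,\phi_{\lambda}(g)\,|c(\lambda)|^{-2}\,d\lambda.
\]
Following Harish-Chandra, I would construct the inverse via wave packets: for $\alpha$ a Schwartz function on $\mathfrak{a}^{\ast}$, set $\psi_{\alpha}(g)=\int \alpha(\lambda)\phi_{\lambda}(g)|c(\lambda)|^{-2}\,d\lambda$, and use the asymptotic expansion $\phi_{\lambda}(\exp H)\sim \sum_{w\in W}c(w\lambda)\,e^{(iw\lambda-\rho)(H)}$ for regular $\lambda$ and $H$ in the positive chamber to verify that $\alpha\mapsto \psi_{\alpha}$ is a two-sided inverse to the spherical transform on the Schwartz side. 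Gangolli's Paley--Wiener theorem then supplies the topological isomorphism between the Schwartz spaces needed to make this rigorous.

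With inversion in hand, Parseval's identity for Schwartz $f$ follows by pairing $f$ with its inverse transform, and standard density arguments extend the identity to every $K$-bi-invariant $f\in L^{2}(G)$. The passage to measures is then essentially a formality. If $\mu\in L^{2}(G)$, write $\mu=f\,dg$ for some $K$-bi-invariant $f\in L^{2}(G)$ with $\widehat{\mu}=\widehat{f}$ and apply Parseval. Conversely, if $\int|\widehat{\mu}|^{2}|c|^{-2}\,d\lambda<\infty$, applying the inverse transform to $\widehat{\mu}$ yields a function $f\in L^{2}(G)$ with $\widehat{f}=\widehat{\mu}$, and uniqueness of the spherical transform (injectivity on finite $K$-bi-invariant measures) forces $\mu=f\,dg$.

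The main obstacle is the analytic content of the inversion formula: the precise asymptotics of $\phi_{\lambda}$ on $\overline{\mathfrak{a}^{+}}$ and the identification of $|c(\lambda)|^{-2}\,d\lambda$ as the spectral measure, together with the control of the polynomial growth of $|c(\lambda)|^{-2}$ on regular $\lambda$ and its controlled behavior on the walls that is needed to justify convergence of the wave-packet integrals. This is precisely the classical Harish-Chandra theory codified in Helgason's monograph, which is why the paper simply cites it rather than reproducing the argument.
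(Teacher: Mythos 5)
The paper offers no proof of this statement at all --- it is quoted directly from Helgason \cite[IV, Thm.~9.1]{He} --- and your outline is a faithful sketch of precisely the classical Harish-Chandra/Helgason argument that the citation points to: wave packets and the asymptotics of $\phi_{\lambda}$ to get inversion, Parseval by density, and then the routine passage from $K$-bi-invariant $L^{2}$ functions to compactly supported $K$-bi-invariant measures via surjectivity of the $L^{2}$ transform and injectivity of the spherical transform (in practice carried out by pairing $\mu$ against test functions through $\mu\ast g$). Nothing further is needed.
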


\begin{corollary}
The $k$-fold convolution product of the orbital measure $\nu _{a}$ belongs
to $L^{2}(G)$ if and only if $\left\vert \phi _{\lambda }(a)\right\vert
^{k}\left\vert c(\lambda )\right\vert ^{-1}\in L^{2}(\mathfrak{a}^{\ast })$.
\end{corollary}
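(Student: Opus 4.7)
The plan is a direct application of the Plancherel theorem just stated. Write $\mu = \nu_{a} \ast \cdots \ast \nu_{a}$ with $k$ factors. Since each $\nu_{a}$ is $K$-bi-invariant and compactly supported, so is $\mu$, and the Plancherel theorem applies. Using the spherical-transform product formula recalled from \cite{AReg}, I would compute $\widehat{\mu}(\lambda) = \phi_{\lambda}(a^{-1})^{k}$, so Plancherel yields
\begin{equation*}
\|\mu\|_{L^{2}(G)}^{2} \;=\; \int_{\mathfrak{a}^{\ast}} |\phi_{\lambda}(a^{-1})|^{2k}\, |c(\lambda)|^{-2}\, d\lambda,
\end{equation*}
with the integral finite precisely when $\mu \in L^{2}(G)$. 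Hence $\mu \in L^{2}(G)$ if and only if $|\phi_{\lambda}(a^{-1})|^{k}\, |c(\lambda)|^{-1} \in L^{2}(\mathfrak{a}^{\ast})$.

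To replace $\phi_{\lambda}(a^{-1})$ by $\phi_{\lambda}(a)$ as written in the statement, I would invoke the standard symmetries of spherical functions on noncompact symmetric spaces. For real $\lambda \in \mathfrak{a}^{\ast}$, the integral formula in the definition of $\phi_{\lambda}$ immediately gives $\overline{\phi_{\lambda}(g)} = \phi_{-\lambda}(g)$, while the classical identity $\phi_{\lambda}(g^{-1}) = \phi_{-\lambda}(g)$ (valid for all $g \in G$) combines with the previous relation to give $\phi_{\lambda}(a^{-1}) = \overline{\phi_{\lambda}(a)}$, so in particular $|\phi_{\lambda}(a^{-1})| = |\phi_{\lambda}(a)|$. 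Substituting into the integral above produces the stated equivalence.

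There is essentially no technical obstacle; this corollary is simply the specialization of Plancherel to convolution powers of a single orbital measure, together with a symmetry of $\phi_{\lambda}$ under inversion. The substantive work of the paper lies elsewhere, namely in estimating the decay of $\phi_{\lambda}(a)$ with enough precision to determine which $k$ make the integral $\int |\phi_{\lambda}(a)|^{2k} |c(\lambda)|^{-2}\, d\lambda$ finite.
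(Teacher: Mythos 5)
Your proposal is correct and is exactly the intended argument: the paper treats this corollary as an immediate consequence of the Plancherel theorem together with the product formula $(\nu_{a}^{k})\widehat{\ }(\lambda)=\phi_{\lambda}(a^{-1})^{k}$, which is what you use. Your explicit justification of $\left\vert \phi_{\lambda}(a^{-1})\right\vert=\left\vert \phi_{\lambda}(a)\right\vert$ via $\phi_{\lambda}(g^{-1})=\phi_{-\lambda}(g)=\overline{\phi_{\lambda}(g)}$ for real $\lambda$ is a detail the paper passes over silently, and it is the right way to fill it.
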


It is known that the spherical functions have good decay properties. To
explain, it is helpful to introduce further terminology and notation.

\begin{definition}
(i) Given $a\in A$ (or $a\in \mathfrak{a}$), by the \textbf{set of
annihilating roots} of $a$ we mean the set%
\begin{equation*}
\Phi (a)=\{\alpha \in \Phi :\alpha (a)=0\}.
\end{equation*}%
Put $\Phi ^{+}(a)=\Phi (a)\cap \Phi ^{+}$. By $(\Phi ^{+}(a))^{c}$ we mean
the complement of $\Phi (a)$ in $\Phi ^{+}$, that is, $(\Phi ^{+}(a))^{c}$ $%
= $ $\{\alpha \in \Phi ^{+}:\alpha (a)\neq 0\}$.

(ii) If $\Phi (a)$ is empty, the element $a$ is called \textbf{regular}. If $%
a$ is regular, we call $\nu _{a}$ a \textbf{regular orbital measure}.
\end{definition}

We will let 
\begin{equation*}
A_{0}=\{g\in A:g\notin N_{G}(K)\}.
\end{equation*}%
The set $N_{G}(K)$ can be characterized as the set of elements $g\in G$ such
that $\alpha (g)=0$ for all roots $\alpha $, hence the set of annihilating
roots of an element in $A_{0}$ is a proper root subsystem. The set of
regular elements is dense in $A$ and in the special case of a rank one
symmetric space all the elements of $A_{0}$ are regular.

Here is the decay result that we will use.

\begin{proposition}
(\cite[Thm. 11.1]{DKV}, see also \cite[Prop. 4.1]{AReg}) For each $a\in
A_{0},$ there is a constant $C_{a}$ such that for all $\lambda \in \mathfrak{%
a}^{\ast },$ 
\begin{equation}
\left\vert \phi _{\lambda }(a)\right\vert \leq C_{a}\sum_{w\in
W}\prod_{\alpha \in (\Phi ^{+}(w(a)))^{c}}\left( 1+\left\vert \langle
\lambda ,\alpha \rangle \right\vert \right) ^{-m_{\alpha }/2}\text{.}
\label{Ca}
\end{equation}
\end{proposition}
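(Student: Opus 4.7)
The plan is to estimate $\phi_\lambda(a) = \int_K \exp((i\lambda-\rho)\mathcal{H}(ak))\,dm(k)$ by treating it as an oscillatory integral on $K$ with phase $k \mapsto \lambda(\mathcal{H}(ak))$ and bounded amplitude $\exp(-\rho\,\mathcal{H}(ak))$, then invoking the method of stationary phase. Heuristically, away from critical points of the phase rapid oscillation (integration by parts) should yield strong decay in $\lambda$, while near each critical point the decay rate is governed by the Hessian, and the product over roots appearing in \eqref{Ca} should arise directly from the block structure of that Hessian relative to the root-space decomposition.

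First I would locate and classify the critical points of the phase. Since $\phi_\lambda$ is $K$-bi-invariant and Weyl-invariant, the phase descends to the flag variety $K/M$, where $M$ is the centralizer of $A$ in $K$, and a classical calculation (essentially Kostant's formula for the derivative of the Iwasawa projection) identifies the critical set of $k\mapsto \lambda(\mathcal{H}(ak))$ with the Weyl orbit of $a$. Next, at the critical point indexed by $w\in W$, I would compute the Hessian using the root-space decomposition $T(K/M) \simeq \bigoplus_{\alpha\in\Phi^+}\mathfrak{g}_\alpha$: the block on $\mathfrak{g}_\alpha$ (of real dimension $m_\alpha$) acts as a scalar proportional to $\langle \lambda,\alpha\rangle\cdot\alpha(w(a))$. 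Consequently the Hessian is non-degenerate exactly on $\bigoplus_{\alpha \in (\Phi^+(w(a)))^c}\mathfrak{g}_\alpha$, and the standard stationary-phase estimate produces the factor $\prod_{\alpha \in (\Phi^+(w(a)))^c}(1+|\langle\lambda,\alpha\rangle|)^{-m_\alpha/2}$; along the degenerate directions one only gets an $O(1)$ bound, which is consistent with those roots being absent from the product.

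Summing the contributions from each $w\in W$ then yields the stated bound, with the constant $C_a$ absorbing the amplitude factor $\exp(-\rho\,\mathcal{H}(ak))$ and the geometric data (injectivity radii, bounds on the second fundamental form of the critical submanifold, etc.) depending on $a$. The main obstacle is that a naive stationary-phase argument gives only an \emph{asymptotic} estimate as $|\lambda|\to\infty$ in a fixed direction, whereas \eqref{Ca} must be uniform in $\lambda\in\mathfrak{a}^*$ with separate decay in each inner product $\langle \lambda,\alpha\rangle$. To get uniformity I would partition $\mathfrak{a}^*$ according to which quantities $\langle\lambda,\alpha\rangle$ are small and which are large, handle each region by combining a smooth cut-off near each critical point with integration by parts in the non-stationary directions, and then patch the pieces together. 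Keeping $C_a$ independent of $\lambda$ while tracking how it blows up as $a$ approaches $N_G(K)$ is exactly the subtle point of DKV, Thm.~11.1.
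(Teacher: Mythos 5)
The paper does not prove this proposition at all: it is imported verbatim from Duistermaat--Kolk--Varadarajan \cite[Thm.~11.1]{DKV} (see also \cite[Prop.~4.1]{AReg}), so there is no in-paper argument to compare yours against. That said, your outline correctly reconstructs the strategy of the cited proof: write $\phi_{\lambda}(a)$ as an oscillatory integral over the flag manifold $K/M$, identify the critical points of the phase $k\mapsto\lambda(\mathcal{H}(ak))$ with the Weyl group elements via the formula for the differential of the Iwasawa projection, and read the decay off the block structure of the Hessian, whose block on the $m_{\alpha}$-dimensional piece of $T(K/M)$ attached to $\alpha\in\Phi^{+}$ (this is $\mathfrak{k}_{\alpha}=(\mathfrak{g}_{\alpha}+\mathfrak{g}_{-\alpha})\cap\mathfrak{k}$ rather than $\mathfrak{g}_{\alpha}$ itself, a harmless imprecision since the dimensions agree) degenerates exactly when $\alpha(w(a))=0$. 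This is where both the exponent $-m_{\alpha}/2$ and the restriction of the product to $(\Phi^{+}(w(a)))^{c}$ come from, and the sum over $W$ collects the contributions of the critical points.

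As a proof, however, your text has a genuine gap, and you have in fact located it yourself: the entire content of DKV's Theorem 11.1 is the \emph{uniformity} of the estimate in $\lambda$. Classical stationary phase gives an asymptotic expansion of $\int e^{it\psi}u$ as $t\to\infty$ for a fixed phase $\psi$ with nondegenerate (or clean) critical set; here the effective phase is $\lambda(\mathcal{H}(ak))$ with $\lambda$ ranging over all of $\mathfrak{a}^{\ast}$, so both the location and the degeneracy type of the critical set vary with the direction of $\lambda$ (when some $\langle\lambda,\alpha\rangle$ vanish the critical set becomes a positive-dimensional submanifold rather than the finite set $W$). Saying that one should partition $\mathfrak{a}^{\ast}$ according to which inner products $\langle\lambda,\alpha\rangle$ are small and cut off near each critical point names the difficulty but does not resolve it: one must show that the error terms in every region are dominated by the \emph{same} product $\prod_{\alpha\in(\Phi^{+}(w(a)))^{c}}(1+\left\vert\langle\lambda,\alpha\rangle\right\vert)^{-m_{\alpha}/2}$ with a constant depending only on $a$, and this requires the parameter-dependent stationary phase machinery that DKV develop at length (uniform estimates over families of phase functions whose critical sets are allowed to degenerate). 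So your proposal is a faithful road map to the cited proof rather than a proof; for the purposes of this paper that is precisely how the proposition is used --- as a quoted black box.
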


It is well known (see \cite[IV.7.2 ]{He}) that there is a constant $C$ such
that 
\begin{equation*}
|c(\lambda )|^{-1}\leq C\prod_{\alpha \in \Phi ^{+}}\left( 1+\left\vert
\langle \lambda ,\alpha \rangle \right\vert \right) ^{m_{\alpha }/2},
\end{equation*}%
thus%
\begin{equation}
(\left\vert \phi _{\lambda }(a)\right\vert ^{k}|c(\lambda )|^{-1})^{2}\leq
C_{a}\max_{w\in W}\prod_{\alpha \in (\Phi ^{+}(w(a)))^{c}}\left\vert
1+\left\vert \langle \lambda ,\alpha \rangle \right\vert \right\vert
^{-m_{\alpha }k}\prod_{\alpha \in \Phi ^{+}}\left\vert 1+\left\vert \langle
\lambda ,\alpha \rangle \right\vert \right\vert ^{m_{\alpha }}  \label{BdPhi}
\end{equation}%
for a new constant $C_{a}$. Combined with Plancherel's theorem, this implies 
$\nu _{a}^{k}$ belongs to $L^{2}(G)$ provided%
\begin{equation}
\int_{\overline{\mathfrak{a}^{\ast +}}}\max_{w\in W}\prod_{\alpha \in (\Phi
^{+}(w(a)))^{c}}\left\vert 1+\langle \lambda ,\alpha \rangle \right\vert
^{-m_{\alpha }k}\prod_{\alpha \in \Phi ^{+}}\left\vert 1+\langle \lambda
,\alpha \rangle \right\vert ^{m_{\alpha }}d\lambda <\infty \text{.}
\label{Int}
\end{equation}

\section{$L^{2}$ results for convolutions of orbital measures at regular
elements}

In \cite{AReg}, bounds were found for the right hand side of (\ref{BdPhi})
that were sufficient to show that any convolution product of more than $\dim
G/K$ regular orbital measures was in $L^{2}(G)$. We will begin by improving
this result, in fact, obtaining sharp $L^{2}$ results for convolution
products of regular orbital measures.

\begin{theorem}
\label{regular}Suppose $a\in A_{0}$ is a regular element. The convolution
products, $\nu _{a}^{k}$, belong to $L^{2}(G)$ if and only if $k\geq 2,$
except if the symmetric space $G/K$ has restricted root system of type $%
A_{1} $ and is of Cartan class $AI$, in which case $k\geq 3$ is both
necessary and sufficient.
\end{theorem}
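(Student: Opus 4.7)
The strategy is to apply the Plancherel criterion (the Corollary above). Since $a$ is regular, $\Phi(a)=\emptyset$, and since $w$ permutes roots, $\Phi(w(a))=w(\Phi(a))=\emptyset$ for every $w\in W$. Hence $(\Phi^{+}(w(a)))^{c}=\Phi^{+}$ for all $w$, so the $\max_{w}$ in (\ref{Int}) collapses to a single product, and $\nu_{a}^{k}\in L^{2}(G)$ whenever
\[
J_{k}:=\int_{\overline{\mathfrak{a}^{\ast +}}}\prod_{\alpha\in\Phi^{+}}(1+|\langle\lambda,\alpha\rangle|)^{-m_{\alpha}(k-1)}\,d\lambda<\infty.
\]
Since the integrand is $W$-invariant under $\lambda\mapsto w\lambda$ (because $|\langle\lambda,\alpha\rangle|=|\langle\lambda,-\alpha\rangle|$ and $m_{\alpha}=m_{-\alpha}$), we may pass to an integral over all of $\mathfrak{a}^{\ast}$ up to a factor of $|W|$. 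Writing $u_{i}=\langle\lambda,\beta_{i}\rangle$ for the simple roots $\beta_{i}$, each pairing $\langle\lambda,\alpha\rangle$ with $\alpha=\sum_{i}n_{i}^{\alpha}\beta_{i}\in\Phi^{+}$ becomes the nonneg integer combination $\sum_{i}n_{i}^{\alpha}u_{i}$, reducing convergence of $J_{k}$ to convergence of
\[
\int_{\mathbb{R}^{n}}\prod_{\alpha\in\Phi^{+}}\Bigl(1+\Bigl|\sum_{i}n_{i}^{\alpha}u_{i}\Bigr|\Bigr)^{-m_{\alpha}(k-1)}du.
\]

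For sufficiency, first take rank $n=1$: the integral reduces to $\int_{\mathbb{R}}\prod_{\alpha\in\Phi^{+}}(1+|n^{\alpha}u|)^{-m_{\alpha}(k-1)}\,du$, convergent iff $(k-1)\sum_{\alpha}m_{\alpha}>1$. Inspecting the classification of rank-one irreducible non-compact symmetric spaces, the only one with $\sum_{\alpha}m_{\alpha}=1$ is $SL(2,\mathbb{R})/SO(2)$ (root system $A_{1}$, Cartan class $AI$); for it $J_{2}$ diverges but $J_{3}=\int(1+|u|)^{-2}du<\infty$, while every other rank-one case has $\sum_{\alpha}m_{\alpha}\geq 2$ and so $J_{2}<\infty$. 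For $n\geq 2$ I would proceed by iterated one-variable integration based on the elementary bound
\[
\int_{\mathbb{R}}\prod_{j}(1+|u+c_{j}|)^{-p_{j}}\,du\lesssim\bigl(1+\max_{i,j}|c_{i}-c_{j}|\bigr)^{1-\sum_{j}p_{j}},
\]
valid whenever $\sum_{j}p_{j}>1$ (with a logarithmic correction at the critical threshold). Since for every simple root $\beta_{i}$ in any irreducible restricted root system of rank at least two one has $\sum_{\alpha\in\Phi^{+},\,n_{i}^{\alpha}>0}m_{\alpha}\geq 2$, the first iteration succeeds for $k=2$; an accounting of the exponents inherited by the remaining variables (depending mildly on the Lie type and carried out case by case using the appendix) shows that every subsequent iteration also succeeds, giving $J_{2}<\infty$.

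For necessity, $\nu_{a}\notin L^{2}(G)$ is automatic because $\nu_{a}$ is singular with respect to Haar measure when $a\in A_{0}$. For the exceptional space $SL(2,\mathbb{R})/SO(2)$ and $k=2$ one must rule out absolute continuity to $L^{2}$; for this I would invoke the Harish-Chandra expansion
\[
\phi_{\lambda}(a)=c(\lambda)\Phi_{\lambda}(a)+c(-\lambda)\Phi_{-\lambda}(a),\qquad \Phi_{\lambda}(a)=e^{(i\lambda-\rho)(\log a)}(1+o(1)),
\]
which for $a=\exp(tH)$ regular and real $\lambda$ yields $|\phi_{\lambda}(a)|^{2}\asymp|c(\lambda)|^{2}\cos^{2}(\lambda t)e^{-2\rho t}$ at large $|\lambda|$. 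Since $\cos^{2}$ averages to $1/2$ and $|c(\lambda)|^{2}\sim C|\lambda|^{-1}$ in this rank-one case,
\[
\int_{\mathfrak{a}^{\ast}}|\phi_{\lambda}(a)|^{4}|c(\lambda)|^{-2}\,d\lambda\gtrsim\int^{\infty}|\lambda|^{-1}\,d\lambda=\infty,
\]
so by Plancherel $\nu_{a}^{2}\notin L^{2}(G)$, while the bound $J_{3}<\infty$ from the first step gives $\nu_{a}^{3}\in L^{2}(G)$.

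The main obstacle I anticipate is the rank-$\geq 2$ sufficiency step: the exponents produced by repeated one-variable integration depend on the incidence of the simple roots in $\Phi^{+}$ and on the multiplicities, so although the first iteration is clean, tracking the remaining exponents uniformly may require a brief case analysis by Lie type and Cartan class. The lower bound in the exceptional case is delicate only in the sense of needing the classical Harish-Chandra asymptotic and the explicit rank-one formula for $|c(\lambda)|^{-2}$.
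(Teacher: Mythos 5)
Your reduction to the convergence of $J_{k}=\int_{\overline{\mathfrak{a}^{\ast +}}}\prod_{\alpha\in\Phi^{+}}(1+|\langle\lambda,\alpha\rangle|)^{-m_{\alpha}(k-1)}\,d\lambda$ is exactly the paper's starting point (its inequality (\ref{spdecay})), your rank-one analysis correctly isolates $SL(2,\mathbb{R})/SO(2)$ as the unique space with total multiplicity $1$, and your lower bound for that space via the Harish--Chandra expansion $\phi_{\lambda}=c(\lambda)\Phi_{\lambda}+c(-\lambda)\Phi_{-\lambda}$ is a legitimate (arguably cleaner) alternative to the paper's route through hypergeometric/Jacobi/Bessel asymptotics; both yield $|\phi_{\lambda}(a)|\gtrsim\lambda^{-1/2}$ off a union of intervals around the zeros of an oscillating factor, whence $\int|\phi_{\lambda}|^{4}|c(\lambda)|^{-2}\,d\lambda$ diverges logarithmically.

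The genuine gap is the sufficiency step in rank $\geq 2$, which is the technical heart of the theorem and which you assert rather than prove. Two concrete problems. First, your elementary one-variable bound $\int_{\mathbb{R}}\prod_{j}(1+|u+c_{j}|)^{-p_{j}}du\lesssim(1+\max|c_{i}-c_{j}|)^{1-\sum p_{j}}$ is false as stated: with $p_{1}=p_{2}=1$ one gets $\int\frac{du}{(1+|u|)(1+|u+c|)}\asymp c^{-1}\log c$, so logarithmic losses occur whenever an \emph{individual} exponent equals $1$, not only when $\sum_{j}p_{j}=1$; since for $k=2$ the exponents are the multiplicities $m_{\alpha}$, which equal $1$ for every root in Cartan class $AI$, these logs accumulate at every iteration and must be tracked. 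Second, the case Lie type $A_{n}$, Cartan class $AI$ ($\eta_{0}=1$) sits exactly at the critical threshold: the crude estimate $\prod_{\alpha\in\Phi^{+}}(1+|\langle\lambda,\alpha\rangle|)^{-m_{\alpha}}\lesssim\|\lambda\|^{-n}$ gives $\int_{1}^{\infty}t^{-n}t^{n-1}dt=\infty$, so "the first iteration succeeds" is not enough and one must extract extra decay from a second large coordinate. This is precisely where the paper does real work: it splits $\overline{\mathfrak{a}^{\ast+}}$ according to which fundamental coordinate is maximal ($\Lambda_{0},\Lambda_{1},\Lambda_{2}$), gains an extra root when the maximum is interior or attained twice, and otherwise exploits the second-largest coefficient to reduce to $\int_{1}^{\infty}t^{-n}\log t\,dt<\infty$, valid only for $n\geq 2$. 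Your plan can be made to work (a direct computation for $A_{2}$, $AI$ gives $\int(1+a_{2})^{-1}\cdot a_{2}^{-1}\log(1+a_{2})\,da_{2}<\infty$), but as written the claim that "every subsequent iteration also succeeds" for $k=2$ across all classical and exceptional types, including the borderline multiplicity-one cases, is exactly the assertion to be proved and is not established. The paper's alternative device --- counting the positive roots $\alpha$ with $\langle\alpha,\lambda\rangle\geq c_{G}\|\lambda\|$ (the sets $U_{\lambda,j}$, with the exceptional types handled by the counts in \cite{HS}) --- avoids the iterated integration entirely except in the one critical case.
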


\begin{remark}
We remark that $k\geq 2$ is necessary since $\nu _{a}$ is always a singular
measure.
\end{remark}

We will first obtain bounds for $\left\vert \phi _{\lambda }(a)\right\vert
^{k}|c(\lambda )|^{-1}$ for the symmetric spaces of classical Lie types. Let 
$\eta _{0}$ denote the multiplicity of the standard roots $e_{i}\pm e_{j},$ $%
\eta _{1}$ the multiplicity of the short roots $e_{i},$ and $\eta _{2}$ the
multiplicity of the long roots $2e_{i}$ (should there be roots of these
forms). The reader can find the values of $\eta _{j}$ for each type in the
appendix.

\begin{lemma}
\label{L:reg}Suppose the restricted root system of $G/K$ is one of the Lie
types $A_{n},$ $B_{n},$ $C_{n}$, $BC_{n}$ or $D_{n}$ and that $a\in A_{0}$
is a regular element. There is a positive constant $C,$ depending only on $%
G/K$ and $a$, such that%
\begin{equation*}
\left( \left\vert \phi _{\lambda }(a)\right\vert ^{k}|c(\lambda
)|^{-1}\right) ^{2}\leq C\min \left( 1,\left\Vert \lambda \right\Vert
^{(1-k)\varrho }\right) \text{ for all }\lambda \in \mathfrak{a}^{\ast }%
\text{ and }k\geq 1\text{,}
\end{equation*}%
where%
\begin{equation*}
\varrho =\varrho (G/K)=\left\{ 
\begin{array}{cc}
\eta _{0}n & \text{for Lie type }A_{n} \\ 
\begin{array}{c}
\eta _{0}(2n-3)+\eta _{1}+\eta _{2} \\ 
\eta _{1}+\eta _{2}%
\end{array}
& 
\begin{array}{c}
\text{for Lie types }B_{n},C_{n},BC_{n},n\geq 2 \\ 
\text{ for Lie type }BC_{1}%
\end{array}
\\ 
\eta _{0}2(n-1)\text{ } & \text{for Lie type }D_{n}%
\end{array}%
\right. .
\end{equation*}
\end{lemma}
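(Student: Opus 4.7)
The plan is to reduce the inequality, via the regularity of $a$, to a polynomial lower bound on a Weyl-invariant product of root factors, and then to verify that lower bound one Lie type at a time using the explicit coordinates in the appendix.

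Because $a$ is regular, $\Phi(a)=\emptyset$, and since the Weyl group permutes $\Phi$ we also have $\Phi(w(a))=\emptyset$ for every $w\in W$; thus $(\Phi^{+}(w(a)))^{c}=\Phi^{+}$ for every $w$, and (\ref{Ca}) collapses to
\begin{equation*}
|\phi_{\lambda}(a)|\leq C_{a}\prod_{\alpha\in\Phi^{+}}(1+|\langle\lambda,\alpha\rangle|)^{-m_{\alpha}/2}.
\end{equation*}
Raising to the $k$-th power and multiplying by the $c$-function upper bound recalled just before (\ref{BdPhi}) yields
\begin{equation*}
\bigl(|\phi_{\lambda}(a)|^{k}|c(\lambda)|^{-1}\bigr)^{2}\leq C\prod_{\alpha\in\Phi^{+}}(1+|\langle\lambda,\alpha\rangle|)^{(1-k)m_{\alpha}}.
\end{equation*}
Since both sides are $W$-invariant and the trivial estimate $\leq C$ handles $\|\lambda\|\leq 1$, the lemma reduces to proving, for $\lambda$ in the closed positive Weyl chamber with $\|\lambda\|\geq 1$, the uniform polynomial lower bound
\begin{equation*}
\prod_{\alpha\in\Phi^{+}}(1+\langle\lambda,\alpha\rangle)^{m_{\alpha}}\geq c\,\|\lambda\|^{\varrho}.
\end{equation*}

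The general approach to this bound is to exhibit, for each Lie type, a subset $S\subseteq\Phi^{+}$ (with multiplicities summing to $\varrho$) such that each $\langle\lambda,\alpha\rangle$ for $\alpha\in S$ is at least $c\,\|\lambda\|$, uniformly over the unit sphere of the positive chamber; the factors from $\Phi^{+}\setminus S$ are $\geq 1$ and may be discarded. For type $A_{n}$ I would take $S=\{e_{1}-e_{j}:2\leq j\leq n+1\}$, of total multiplicity $n\eta_{0}=\varrho$. The traceless normalization gives $\lambda_{1}\geq\|\lambda\|/\sqrt{n+1}$, and the differences $\lambda_{1}-\lambda_{j}$ telescope into partial sums of the simple-root pairings; a block-counting argument on the ordered coordinates shows that at least $n$ positive-root pairings are strictly positive for every nonzero chamber element, and compactness of the chamber's unit sphere converts this into the required uniform lower bound. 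Type $D_{n}$ is handled analogously with $S=\{e_{1}\pm e_{j}:2\leq j\leq n\}$, of total multiplicity $2(n-1)\eta_{0}=\varrho$.

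The main obstacle, as expected, is the multi-length case $B_{n},C_{n},BC_{n}$ with $n\geq 2$, where roots of three different lengths and distinct multiplicities $\eta_{0},\eta_{1},\eta_{2}$ must be combined. The plan is to partition the closed positive chamber into sub-cones indexed by the wall (such as $\lambda_{1}=\lambda_{2}$, $\lambda_{n-1}=\lambda_{n}$, or $\lambda_{n}=0$) that $\lambda$ is nearest, and on each sub-cone to choose an adapted $S$ consisting of the short root $e_{1}$ (multiplicity $\eta_{1}$), the long root $2e_{1}$ (multiplicity $\eta_{2}$), and enough of the roots $e_{1}\pm e_{j}$ to contribute $(2n-3)\eta_{0}$. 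The deficit of one root relative to the naive $2(n-1)\eta_{0}$ accounts for the fact that, near the chosen wall, one pairing of the form $\lambda_{1}\pm\lambda_{j}$ cannot be bounded uniformly below by $c\,\|\lambda\|$ and must be omitted from $S$. Gluing the sub-cone estimates along their common boundaries via a continuity and compactness argument yields the uniform bound. The rank-one case $BC_{1}$ is immediate, since $\Phi^{+}=\{e_{1},2e_{1}\}$ and both pairings equal positive multiples of $\|\lambda\|$, giving the bound with $\varrho=\eta_{1}+\eta_{2}$ directly.
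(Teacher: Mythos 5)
Your reduction of the lemma to the uniform lower bound $\prod_{\alpha \in \Phi ^{+}}(1+\langle \lambda ,\alpha \rangle )^{m_{\alpha }}\geq c\left\Vert \lambda \right\Vert ^{\varrho }$ on the closed positive chamber is exactly the paper's first step, and your rank-one case is fine. The gap is in the verification of that bound: every witnessing set $S$ you propose consists only of roots containing $e_{1}$, and such sets fail on the chamber walls where the leading coordinates coincide. In type $A_{n}$, take $\lambda $ proportional to $(1,\dots ,1,-n)$ (the fundamental weight $\lambda _{n}$): then $\lambda _{1}-\lambda _{j}=0$ for $2\leq j\leq n$, so only one element of your $S=\{e_{1}-e_{j}\}$ has nonzero pairing; the $n$ roots that actually work here are $e_{i}-e_{n+1}$, $i\leq n$. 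In type $D_{n}$, for $\lambda =(1,\dots ,1,-1)$ only $n-1$ of the $2(n-1)$ roots $e_{1}\pm e_{j}$ pair nontrivially with $\lambda $. Worst of all, in types $B_{n},C_{n},BC_{n}$ with $n\geq 3$, near the wall $\lambda _{1}=\cdots =\lambda _{n}$ all $n-1$ pairings $\lambda _{1}-\lambda _{j}$ degenerate simultaneously, so your accounting of a ``deficit of one root'' is wrong: at most $n-1<2n-3$ of the roots $e_{1}\pm e_{j}$ survive, and no adapted $S$ built only from roots containing $e_{1}$ can reach the exponent $(2n-3)\eta _{0}$.

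The repair --- and this is what the paper does --- is to let the witnessing roots depend on $\lambda $ and to allow roots not containing $e_{1}$. For $A_{n}$ and $D_{n}$ one writes $\lambda =\sum a_{i}\lambda _{i}$ in the fundamental weight basis, picks $m$ with $a_{m}=\max a_{i}$, and uses all positive roots whose $m$-th simple-root coefficient is positive (at least $n$, resp. $2(n-1)$, of them, with $m=n-1,n$ treated separately in $D_{n}$); for $B_{n},C_{n},BC_{n}$ one splits into the cases $a_{2}\leq a_{1}/2$ and $a_{2}>a_{1}/2$, bringing in the roots $e_{2}+e_{j}$ and $(2)e_{2}$ in the second case. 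Your compactness observation (that a pointwise count of strictly positive pairings upgrades, via homogeneity and continuity of the $q$-th largest pairing, to a uniform bound) is a legitimate alternative to the paper's explicit choice of norm, but it still requires establishing those pointwise counts, and that is precisely the case analysis your fixed sets skip.
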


\begin{proof}
Throughout the proof, the constant $C$ may vary from one occurrence to
another. We will assume $G/K$ has rank $n$ and there is no loss of
generality in assuming $\lambda \in $ $\overline{\mathfrak{a}^{\ast +}}$.

As $a$ is regular, $\Phi (w(a))$ is empty for all $w\in W$ and thus 
\begin{equation}
\left( \left\vert \phi _{\lambda }(a)\right\vert ^{k}|c(\lambda
)|^{-1}\right) ^{2}\leq C\prod_{\alpha \in \Phi ^{+}}\left\vert 1+\langle
\lambda ,\alpha \rangle \right\vert ^{m_{\alpha }(1-k)}.  \label{spdecay}
\end{equation}%
Of course, if $\left\Vert \lambda \right\Vert \leq 1$, then $\prod_{\alpha
\in \Phi ^{+}}\left\vert 1+\langle \lambda ,\alpha \rangle \right\vert \leq
C,$ so our interest is in $\left\Vert \lambda \right\Vert \geq 1$.

We will let 
\begin{equation}
T_{\lambda }=\{\alpha \in \Phi ^{+}:\langle \alpha ,\lambda \rangle \geq
c_{G}\left\Vert \lambda \right\Vert \}  \label{T}
\end{equation}%
where the choice of constant $c_{G}>0$ will depend on the Lie type and will
be made clear later in the proof. We will let $S_{0}=\{e_{i}\pm e_{j}:1\leq
i<j\leq n\}$, $S_{1}=\{e_{i}:1\leq i\leq n\}$ and $S_{2}=\{2e_{i}:1\leq
i\leq n\}$ (should they exist). For example, in type $A_{n}$, $S_{0}=\Phi
^{+}$ and $S_{1},S_{2}$ do not exist. Notice $m_{\alpha }=\eta _{j}$ if $%
\alpha \in S_{j}$. Put 
\begin{equation}
U_{\lambda ,j}=T_{\lambda }\cap S_{j}  \label{U}
\end{equation}%
and write $\left\vert U_{\lambda ,j}\right\vert $ for the cardinality of
this set.

With this notation, we have%
\begin{equation}
\left( \left\vert \phi _{\lambda }(a)\right\vert ^{k}|c(\lambda
)|^{-1}\right) ^{2}\leq C\min \left( 1,\left\Vert \lambda \right\Vert
^{(1-k)\sum_{j}\eta _{j}\left\vert U_{\lambda ,j}\right\vert }\right) .
\label{RegBd1}
\end{equation}

We will find lower bounds on $\left\vert U_{\lambda ,j}\right\vert $ by
analyzing on a type-by-type basis.

\medskip

Type $A_{n}$: We can write $\lambda =\sum_{i=1}^{n}a_{i}\lambda _{i}$ where $%
\lambda _{i}$ are the fundamental dominant weights (the dual basis to the
basis of simple roots) and $a_{i}\geq 0$. Since all norms are equivalent on
a finite dimensional normed space, we can take $\left\Vert \lambda
\right\Vert =\max_{i}a_{i}=a_{m}$ (say). It will suffice to determine which
positive roots $\alpha =\sum_{i=1}^{n}b_{i}\alpha _{i}$ have $b_{m}>0$ (and
hence $b_{m}\geq 1)$ for then $\langle \alpha ,\lambda \rangle
=\sum_{i}a_{i}b_{i}\geq a_{m}b_{m}\geq \left\Vert \lambda \right\Vert $ and $%
U_{\lambda ,0}$ will contain that set of roots. (Here we will take $c_{G}=1$%
.) These will be the roots $\alpha =e_{i}-e_{j}$ where $1\leq i\leq m$ and $%
m<j\leq n+1$, thus the minimum value of $\left\vert U_{\lambda
,0}\right\vert $ is $n$.

\smallskip

Type $B_{n},$ $C_{n},$ $BC_{n}$: We leave the very easy case of $BC_{1}$ to
the reader and assume $n\geq 2$. Here we can write $\lambda
=\sum_{i=1}^{n}a_{i}e_{i}$ where $a_{i}\geq 0$ are non-increasing, and $e_{i}
$ are the standard basis vectors for $\mathbb{R}^{n}$. Taking the Euclidean
norm, we have $a_{1}\leq \left\Vert \lambda \right\Vert \leq na_{1}$. We
have $\langle \alpha ,\lambda \rangle \geq a_{1}$ if $\alpha =e_{1}+e_{j}$
for $j=2,...,n$ or $\alpha =(2)e_{1}$. In particular, for any choice of $%
c_{G}\leq 1$ we have $\left\vert U_{\lambda ,j}\right\vert \geq 1$ for $j=1$
in type $B_{n}$, for $j=2$ for type $C_{n}$ and for both $j=1,2$ for type $%
BC_{n}$. If $a_{2}\leq a_{1}/2,$ then we also have $\langle \alpha ,\lambda
\rangle \geq a_{1}/2$ if $\alpha =e_{1}-e_{j}$ for $j=2,...,n$. In this
case, $\left\vert U_{\lambda ,0}\right\vert \geq 2(n-1)$. Otherwise, $%
a_{2}>a_{1}/2,$ and then $\langle \alpha ,\lambda \rangle \geq a_{1}/2$ if $%
a=e_{2}+e_{j}$, $j=3,...,n$ or $\alpha =(2)e_{2}$. In this case, taking $%
c_{G}=1/2$, we have $\left\vert U_{\lambda ,0}\right\vert \geq 2n-3$ and $%
\left\vert U_{\lambda ,1}\right\vert \geq 2$ for type $B_{n},$ with similar
statements for $C_{n}$ and $BC_{n}$. Of course, the minimum of $2\eta
_{0}(n-1)+\eta _{1}+\eta _{2}$ and $\eta _{0}(2n-3)+2(\eta _{1}+\eta _{2})$
is at least $\eta _{0}(2n-3)+(\eta _{1}+\eta _{2})$.

\smallskip

Type $D_{n}$: As with type $A_{n}$, we write $\lambda
=\sum_{i=1}^{n}a_{i}\lambda _{i}$ where $\lambda _{i}$ are the fundamental
dominant weights and $a_{i}\geq 0$. We again take $c_{G}=1$. It suffices to
determine which $\alpha =\sum b_{i}\alpha _{i}$ have $b_{m}>0,$ where $%
a_{m}=\max_{i}a_{i}$. If $m\neq n-1,n,$ these will be the roots $\alpha
=e_{i}+e_{j}$ for $i\leq m$ and $j>i$ and for $\alpha =e_{i}-e_{j}$ for $%
i\leq m<j$. There are at least $2(n-1)$ of these roots. If $m=n,$ all the
roots $e_{i}+e_{j}$ have the desired property, while if $m=n-1,$ the
positive roots $e_{i}-e_{n}$, $i<n$ and $e_{i}+e_{j}$, $i<j<n$ all work.
Thus for all $\lambda ,$ $\left\vert U_{\lambda ,0}\right\vert \geq \min
\left( 2(n-1),\binom{n}{2}\right) =2(n-1)$ as we may assume $n\geq 4$ for
this type.
\end{proof}

\begin{proof}
\lbrack of Theorem \ref{regular}] We begin by proving the sufficiency of the
choice of $k$. As in the lemma, the constant $C>0,$ depending on $G/K$ and $%
a,$ which appears throughout may change from one occurrence to another. We
again assume $G/K$ has rank $n$.

When $G/K$ has a restricted root space of classical Lie type, the previous
lemma shows that 
\begin{equation}
\left\Vert \nu _{a}^{k}\right\Vert _{2}^{2}\leq C\int_{\overline{\mathfrak{a}%
^{\ast +}}}\min (1,\left\Vert \lambda \right\Vert ^{(1-k)\varrho })d\lambda
\leq C\int_{1}^{\infty }t^{(1-k)\varrho }t^{n-1}dt  \label{Pl}
\end{equation}%
and this will be finite if $(1-k)\varrho +n-1<-1$. It is a routine exercise,
using the values of $\varrho $ given in the Lemma, to see that if $k\geq 2,$
then this is true for all these classical types, except if $G/K$ is of Lie
type $A_{n}$ and Cartan class $AI$. In this latter case, $\eta _{0}=1$ and
we have that the integral above is finite provided $k\geq 3$.

\medskip

However, the argument can be improved for the Lie type $A_{n},$ Cartan class 
$AI,$ when $n\geq 2$. Let%
\begin{eqnarray*}
\Lambda _{0} &=&\{\lambda =\sum_{i=1}^{n}a_{i}\lambda _{i}\in \overline{%
\mathfrak{a}^{\ast +}}:a_{j}=\max a_{i}\text{ for some }j\neq 1,n\}, \\
\Lambda _{1} &=&\{\lambda =\sum_{i=1}^{n}a_{i}\lambda _{i}\in \overline{%
\mathfrak{a}^{\ast +}}:a_{1}=a_{n}=\max a_{i}\}
\end{eqnarray*}%
and let $\Lambda _{2}$ be the rest of $\overline{\mathfrak{a}^{\ast +}}$.
Note that 
\begin{equation*}
\left\Vert \nu _{a}^{2}\right\Vert _{2}^{2}\leq C\sum_{j=0}^{2}\int_{\Lambda
_{j}}\left( \left\vert \phi _{\lambda }(a)\right\vert ^{2}|c(\lambda
)|^{-1}\right) ^{2}d\lambda .
\end{equation*}

Let $U_{\lambda ,0}$ be as in the lemma. Note that $\left\vert U_{\lambda
,0}\right\vert \geq n+1$ if $\lambda \in \Lambda _{0}\cup \Lambda _{1}$,
from whence one can see that $\int_{\Lambda _{j}}\left( \left\vert \phi
_{\lambda }(a)\right\vert ^{2}|c(\lambda )|^{-1}\right) ^{2}d\lambda <\infty 
$ for $j=0,1$.

If, instead $\lambda \in \Lambda _{2}$ (so either $a_{1}$ or $a_{n}$ is the
unique maximal coordinate), then $\left\vert U_{\lambda ,0}\right\vert =n$.
However, there will also be at least $n-1$ positive roots $\alpha \notin
U_{\lambda ,0}$ such that $\langle \alpha ,\lambda \rangle \geq a_{J}$,
where $a_{J}$ is the second largest coefficient. Using this fact, we obtain
the bound 
\begin{eqnarray*}
\int_{\Lambda _{2}}\left( \left\vert \phi _{\lambda }(a)\right\vert
^{2}|c(\lambda )|^{-1}\right) ^{2}d\lambda &\leq &C\int_{0}^{\infty
}(1+t_{1})^{-n}\left(
\int_{0}^{t_{1}}(1+t_{2})^{-(n-1)}t_{2}^{n-2}dt_{2}\right) dt_{1} \\
&\leq &C\left( 1+\int_{1}^{\infty
}t_{1}^{-n}\int_{1}^{t_{1}}t_{2}^{-1}dt_{1}\right) \\
&=&C\left( 1+\int_{1}^{\infty }t_{1}^{-n}\log t_{1}dt_{1}\right)
\end{eqnarray*}%
and this is finite since we are assuming $n\geq 2$.

Thus even when the symmetric space is of Cartan class $AI,$ we have $\nu
_{a}^{2}\in L^{2}$ provided the rank of $G/K$ is at least $n=2$. That
completes the proof of sufficiency of the choice of $k$ for the classical
Lie types.

\medskip

For the symmetric spaces with restricted root spaces of exceptional Lie
types, we argue in a similar fashion. We define $T_{\lambda }$ as in (\ref{T}%
) and decompose the set of positive restricted roots into maximal disjoint
sets $S_{j},$ consisting of the positive roots of a given multiplicity.
Again, put $U_{\lambda ,j}=T_{\lambda }\cap S_{j}$ and observe that again (%
\ref{RegBd1}) holds.

If the restricted root space is Lie type $G_{2},E_{6},E_{7}$ or $E_{8}$,
then all the roots have the same multiplicity, so we take $S_{0}=\Phi ^{+}$.
It is shown in \cite{HS} (see, for example, Tables 2,3,4) that the minimum
cardinality of $U_{\lambda ,0}$ is at least $5,16,27$ and $57$ respectively.

If the restricted root space is Lie type $F_{4}$ and all the roots have the
same multiplicity, again $S_{0}=\Phi ^{+}$ and the minimum cardinality of $%
U_{\lambda ,0}$ is shown in \cite{HS} to be $15$. Otherwise, there are two
distinct multiplicities and we define $S_{0},S_{1}$ accordingly. As can be
seen from \cite{HS}, $\left\vert U_{\lambda ,0}\right\vert \geq 9$ and $%
\left\vert U_{\lambda ,1}\right\vert \geq 6$. Using (\ref{Pl}) again, it is
easy to check that $k\geq 2$ suffices.

\medskip

We turn now to proving the necessity of the choice of $k$. Since $\nu _{a}$
is a singular measure with respect to Haar measure, $k\geq 2$ is certainly
necessary (in all cases). Thus we need only consider the symmetric space $%
G/K $ of Lie type $A_{1}$ and Cartan class $AI$ and show that $\nu _{a}^{2}$
does not belong to $L^{2}$.

For this symmetric space, the spherical functions can be expressed in terms
of the hypergeometric functions $_{2}F_{1}$ as follows. Denote by $\alpha $
the (single) positive root and choose $H_{0}\in \mathfrak{a}$ such that $%
\alpha (H_{0})=1$. For any $t\neq 0$, it is known (\cite[11.5.15]{Wo}) that 
\begin{equation*}
\phi _{\lambda }(\exp tH_{0})=_{2}F_{1}\left( \frac{1+i\lambda }{4},\frac{%
1-i\lambda }{4},1,-\sinh ^{2}t\right) .
\end{equation*}%
Next, we use the relationship between the hypergeometric functions and the
Jacobi and Bessel functions (c.f., \cite[Sec. 6.4]{FH}):%
\begin{equation*}
J_{u}^{(0,b)}(t)=_{2}F_{1}\left( \frac{b+1+iu}{2},\frac{b+1-iu}{2},1,-\sinh
^{2}t\right) ,
\end{equation*}%
while 
\begin{equation*}
J_{u}^{(0,b)}(t)=cJ_{0}(ut)+O(u^{-3/2}),
\end{equation*}%
where $J_{0}(\cdot )$ is the Bessel function and $c$ is a non-zero constant
depending on $t$. It is well known (\cite[9.2.1]{AS}) that for $z$ $>0,$ 
\begin{equation*}
J_{0}(z)=\frac{C}{\sqrt{z}}\left( \cos (z-\pi /4)+O(z^{-1}\right) )
\end{equation*}%
for some $C\neq 0$. Thus for all $\lambda >0,$%
\begin{equation}
\phi _{\lambda }(\exp tH_{0})=\frac{C}{\sqrt{\lambda }}\cos (\lambda t/2-\pi
/4)+O(\left\vert \lambda \right\vert ^{-3/2})  \label{PhiAsy}
\end{equation}%
where the non-zero constant $C$ depends only on $t$.

For any integer $j,$ let $I_{j}$ denote the interval 
\begin{equation*}
I_{j}=\frac{2}{t}\left[ (2j+1)\frac{\pi }{2}+\frac{\pi }{8},(2j+1)\frac{\pi 
}{2}+\frac{3\pi }{8}\right]
\end{equation*}%
and let $\bigcup_{j\in \mathbb{Z}}I_{j}=I^{\ast }$. If $\lambda \notin
I^{\ast }$, then $\left\vert C\cos (\lambda t/2-\pi /4)\right\vert \geq
\left\vert C\cos 3\pi /8\right\vert =\varepsilon _{0}>0$. The asymptotic
formula for $\phi _{\lambda },$ (\ref{PhiAsy}), shows that we may choose $%
\lambda _{1}$ sufficiently large so that for all $\lambda \geq \lambda _{1}$
with $\lambda \notin I^{\ast },$ we have 
\begin{equation*}
\left\vert \phi _{\lambda }(\exp tH_{0})\right\vert \geq \frac{\varepsilon
_{0}}{2\sqrt{\lambda }}.
\end{equation*}%
It is shown in the proof of Prop. 7.2 in \cite{He}, that for the Harish
Chandra $c$ function, $\lim_{\lambda \rightarrow \infty }c(\lambda
)^{-1}\lambda ^{-1/2}=2\sqrt{\pi }$. Thus $c(\lambda )^{-1}\geq \sqrt{\pi
\lambda }$ for all $\lambda \geq \lambda _{2},$ say. Let $\lambda _{0}=\max
(\lambda _{1},\lambda _{2})$. Putting these bounds together shows that 
\begin{equation*}
\int \left\vert \phi _{\lambda }^{2}(\exp tH_{0})c(\lambda )^{-1}\right\vert
^{2}d\lambda \geq \int_{\lambda \notin I^{\ast },\lambda \geq \lambda
_{0}}\left( \frac{\varepsilon _{0}}{2\sqrt{\lambda }}\right) ^{4}\left( 
\sqrt{\pi \lambda }\right) ^{2}d\lambda =C\int_{\lambda \notin I^{\ast
},\lambda \geq \lambda _{0}}\frac{d\lambda }{\lambda }.
\end{equation*}%
Choose $k_{0}$ such that $(2k-1)\pi /2+3\pi /8\geq \lambda _{0}$ for $k\geq
k_{0}$ and set 
\begin{equation*}
L_{k}=\frac{2}{t}[(2k-1)\frac{\pi }{2}+\frac{3\pi }{8},(2k+1)\frac{\pi }{2}+%
\frac{\pi }{8}].
\end{equation*}%
We deduce that 
\begin{equation*}
\int \left\vert \phi _{\lambda }^{2}(\exp tH_{0})c(\lambda )^{-1}\right\vert
^{2}d\lambda \geq C\sum_{k=k_{0}}^{\infty }\int_{L_{k}}\frac{d\lambda }{%
\lambda }\geq C\sum_{k=k_{0}}^{\infty }\frac{\text{length}(L_{k})}{k}=\infty
.
\end{equation*}%
Consequently, $\phi _{\lambda }^{2}(\exp tH_{0})c(\lambda )^{-1}\notin L^{2}$
and that proves $\nu _{a}^{2}\notin L^{2}$ for any $a=\exp tH_{0},t\neq 0,$
and hence for any regular $a$.
\end{proof}

\begin{remark}
It is known that for any non-compact, rank $1$ symmetric space, $\nu
_{a}\ast \nu _{a}$ belongs to $L^{1}$ for all $a\in A_{0}$ (\cite{GSFunc}).
Thus the $L^{1}-L^{2}$ dichotomy fails for the symmetric space of Lie type $%
A_{1}$ and Cartan class $AI$. Interestingly, the $L^{1}-$ $L^{2}$ dichotomy
holds for all the regular orbital measures in all the other symmetric spaces
since we obviously have $\nu _{a}^{k}\in L^{1}$ only if $k\geq 2$.
\end{remark}

\begin{corollary}
\label{Creg}Let $a_{1},a_{2},a_{3}$ be regular elements in $A$. If $G/K$ is
Lie type $A_{1}$ and Cartan class $AI$, then $\nu _{a_{1}}\ast \nu
_{a_{2}}\ast \nu _{a_{3}}\in L^{2}$. Otherwise, $\nu _{a_{1}}\ast \nu
_{a_{2}}\in L^{2}$.
\end{corollary}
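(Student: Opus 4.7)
The approach is to reduce the mixed-element case to the diagonal case already settled in Theorem~\ref{regular} via H\"older's inequality applied on the spherical side. By the multiplicativity of the spherical transform under convolution recalled in Section~2 together with Plancherel's theorem,
\begin{equation*}
\|\nu_{a_1}\ast\cdots\ast\nu_{a_r}\|_{L^2(G)}^2 = \int_{\mathfrak{a}^{\ast}}\prod_{i=1}^{r}|\phi_{\lambda}(a_i^{-1})|^{2}\,|c(\lambda)|^{-2}\,d\lambda.
\end{equation*}
One preliminary observation: if $a$ is regular then so is $a^{-1}$, since $\alpha(a^{-1})=-\alpha(a)\neq 0$ for every root $\alpha$; hence Theorem~\ref{regular} applies equally well with $a$ replaced by $a^{-1}$.

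For the generic case ($r=2$ and $G/K$ not of Lie type $A_1$ and Cartan class $AI$), I would apply Cauchy--Schwarz with respect to the weighted measure $|c(\lambda)|^{-2}\,d\lambda$, taking $f_i(\lambda)=|\phi_{\lambda}(a_i^{-1})|^{2}$, to obtain
\begin{equation*}
\|\nu_{a_1}\ast\nu_{a_2}\|_{2}^{2} \leq \Bigl(\int|\phi_{\lambda}(a_1^{-1})|^{4}|c(\lambda)|^{-2}\,d\lambda\Bigr)^{1/2}\Bigl(\int|\phi_{\lambda}(a_2^{-1})|^{4}|c(\lambda)|^{-2}\,d\lambda\Bigr)^{1/2} = \|\nu_{a_1}^{2}\|_{2}\,\|\nu_{a_2}^{2}\|_{2},
\end{equation*}
and each factor on the right is finite by Theorem~\ref{regular}.

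For the exceptional case ($r=3$, $G/K$ of Lie type $A_1$ and Cartan class $AI$), I would instead apply the three-factor generalized H\"older inequality with all three exponents equal to $3$ (so that $\tfrac{1}{3}+\tfrac{1}{3}+\tfrac{1}{3}=1$):
\begin{equation*}
\|\nu_{a_1}\ast\nu_{a_2}\ast\nu_{a_3}\|_{2}^{2} \leq \prod_{i=1}^{3}\Bigl(\int|\phi_{\lambda}(a_i^{-1})|^{6}|c(\lambda)|^{-2}\,d\lambda\Bigr)^{1/3} = \prod_{i=1}^{3}\|\nu_{a_i}^{3}\|_{2}^{2/3}.
\end{equation*}
Each factor is finite, again by Theorem~\ref{regular}, since $k\geq 3$ is precisely what suffices for a single regular orbital measure in this setting.

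I do not expect any real obstacle: all the analytic work has been front-loaded into Theorem~\ref{regular}, and H\"older's inequality is exactly the tool that converts single-element $L^{2}$ bounds into mixed-element ones without losing anything substantial.
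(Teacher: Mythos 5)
Your proposal is correct and follows essentially the same route as the paper: Plancherel's theorem together with the (generalized) H\"older inequality, reducing the mixed-element statement to the single-element bounds of Theorem~\ref{regular}. The paper writes out only the three-factor case with exponents $(3,3,3)$ and notes the two-factor case is easier; your Cauchy--Schwarz treatment of that case and the remark that regularity is preserved under $a\mapsto a^{-1}$ are consistent with this.
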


\begin{proof}
We will prove the first statement as the second is even easier. Let $\mu =$ $%
\nu _{a_{1}}\ast \nu _{a_{2}}\ast \nu _{a_{3}}$. By the Plancherel formula,%
\begin{equation*}
\left\Vert \mu \right\Vert _{2}^{2}=\int_{\mathfrak{a}^{\ast }}\left\vert 
\widehat{\mu }(\lambda )\right\vert ^{2}\left\vert c(\lambda )\right\vert
^{-2}d\lambda =\int_{\mathfrak{a}^{\ast }}\left\vert \prod_{i=1}^{3}\phi
_{\lambda }(a_{i}^{-1})\right\vert ^{2}\left\vert c(\lambda )\right\vert
^{-2}d\lambda .
\end{equation*}%
Applying the generalized Holder's inequality gives%
\begin{equation*}
\left\Vert \mu \right\Vert _{2}^{2}\leq \prod_{i=1}^{3}\left( \int_{%
\mathfrak{a}^{\ast }}\left\vert \phi _{\lambda }(a_{i}^{-1})\right\vert
^{6}\left\vert c(\lambda )\right\vert ^{-2}d\lambda \right)
^{1/3}=\prod_{i=1}^{3}\left\Vert \nu _{a_{i}}^{3}\right\Vert _{2}^{2/3},
\end{equation*}%
and the latter product is finite according to the Theorem.
\end{proof}

\bigskip

\section{\protect\bigskip Smoothness of convolutions of arbitrary orbital
measures}

\subsection{ $L^{2}$ results}

The goal of this section is to show that for all $a\in A_{0}$ (not just
regular $a)$ there is an index $k$ such that $\nu _{a}^{k}\in L^{2}(G)$. As
in the proof of Theorem \ref{regular}, we will continue to use the notation $%
\eta _{0}$ to denote the multiplicity of the roots $e_{i}\pm e_{j},$ $\eta
_{1}$ for the multiplicity of the short roots $e_{i},$ and $\eta _{2}$ for
the multiplicity of the long roots $2e_{i}$ when the symmetric space is of
classical Lie type $A_{n}$, $B_{n}$, $C_{n}$, $BC_{n}$ or $D_{n}$. We recall
that the values of $\eta _{j}$ depend on the Lie type and Cartan class and
can be found in the Appendix.

\begin{theorem}
\label{TMain}Let $G/K$ be a non-compact symmetric space of type $A_{n}$, $%
B_{n}$, $C_{n}$, $D_{n}$ or $BC_{n}$. If $v_{a_{1}},...,v_{a_{k}}$ are any
orbital measures on $G$ with $a_{i}\in A_{0},$ then $v_{a_{1}}\ast \cdot
\cdot \cdot \ast v_{a_{k}}\in L^{2}(G)$ provided $k>k_{G}$ where 
\begin{equation*}
k_{G}=\left\{ 
\begin{array}{cc}
n+n/\eta _{0} & \text{ for type }A_{n} \\ 
n-1+n/(2\eta _{0}) & \text{for type }D_{n} \\ 
\begin{array}{c}
2(n-1)+(n+\eta _{1}+\eta _{2})/\eta _{0} \\ 
\max \left( 4,2+(\eta _{1}+\eta _{2})/(2\eta _{0})\right)%
\end{array}
& 
\begin{array}{c}
\text{for type }B_{n},C_{n},BC_{n},n\geq 3 \\ 
\text{for }B_{2},C_{2},BC_{2}%
\end{array}%
\end{array}%
\right. .
\end{equation*}
\end{theorem}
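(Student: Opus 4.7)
The plan is to invoke Plancherel's theorem together with the decay estimate (\ref{BdPhi}), reducing the problem to showing that the integral in (\ref{Int}) is finite whenever $k > k_G$. By Weyl invariance of both Lebesgue measure on $\mathfrak{a}^{\ast}$ and of $\Phi$, the $\max_{w\in W}$ inside (\ref{Int}) may be dominated by a sum, and after the change of variable $\lambda \mapsto w^{-1}\lambda$ in each summand it suffices to show that for every $a \in A_0$,
\begin{equation*}
I(a,k) := \int_{\mathfrak{a}^{\ast}} \prod_{\alpha \in \Phi^{+}(a)} (1+|\langle\lambda,\alpha\rangle|)^{m_{\alpha}} \prod_{\alpha \in (\Phi^{+}(a))^{c}} (1+|\langle\lambda,\alpha\rangle|)^{m_{\alpha}(1-k)}\, d\lambda
\end{equation*}
is finite when $k > k_G$.

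Next I would apply a Fubini decomposition tuned to the proper root subsystem $\Phi(a)$. Let $V \subseteq \mathfrak{a}^{\ast}$ denote the real span of $\Phi(a)$ and write $\lambda = \lambda' + \lambda''$ with $\lambda' \in V$ and $\lambda'' \in V^{\perp}$. Then $\langle\lambda,\alpha\rangle = \langle\lambda',\alpha\rangle$ for every $\alpha \in \Phi(a)$, while for $\alpha \in (\Phi^{+}(a))^{c}$ the pairing picks up both components. In the inner integration over $\lambda'' \in V^{\perp}$ only the non-annihilating roots appear (all with negative exponent $m_{\alpha}(1-k)$); since every $\alpha \in (\Phi^{+}(a))^{c}$ has nonzero projection onto $V^{\perp}$ (otherwise $\alpha$ would lie in $\Phi(a)$), the inner integral converges once $k$ is large enough, yielding a function on $V$ whose residual behaviour can be tracked explicitly. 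The outer integration over $V$ then combines the growth $\prod_{\alpha \in \Phi^{+}(a)}(1+|\langle\lambda',\alpha\rangle|)^{m_{\alpha}}$ with the residual decay coming from the $V$-projections of the non-annihilating roots.

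The bulk of the proof is a type-by-type analysis, mirroring Lemma~\ref{L:reg} but carried out for each admissible $\Phi(a)$. For the classical Lie types, $\Phi(a)$ admits a transparent combinatorial description: in type $A_n$ it corresponds to a partition of $\{1,\dots,n+1\}$ into blocks on which $a$ is constant, and in types $B_n, C_n, BC_n, D_n$ one further specifies which coordinates of $a$ vanish. For each configuration I would split the positive chamber of $V$ using a $T_{\lambda}$-set as in (\ref{T})--(\ref{U}), derive a lower bound on the number of non-annihilating roots $\alpha$ with $\langle\alpha,\lambda\rangle \gtrsim \|\lambda\|$, and reduce to a radial integral as in (\ref{Pl}). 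Balancing the total growth exponent $\sum_{\alpha \in \Phi^{+}(a)} m_{\alpha}$ against the available $k$-fold decay from the non-annihilating roots then yields the convergence threshold $k > k_G$ with the stated formulas; the integer parts $n$, $n-1$, $2(n-1)$ come from the dimensional contribution of the outer integration over $V$, while the fractional parts $n/\eta_0$, $n/(2\eta_0)$, $(n+\eta_1+\eta_2)/\eta_0$ arise from balancing multiplicities against the minimum number of non-annihilating roots with large pairing on each wall.

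The main obstacle is the combinatorial enumeration of all possible $\Phi(a)$ and identification of the extremal configuration that forces $k_G$. Showing that a particular choice (for example, a maximal subsystem of type $A_{n-1}$ inside $A_n$, or a subsystem removing a single coordinate in $BC_n$) is genuinely worst-case, and that the integral remains convergent at exactly the stated threshold, requires careful case-by-case bookkeeping; in particular one must verify that the non-annihilating roots project sufficiently onto every coordinate direction of $V$ to cancel the growth contributed by $\Phi(a)$. The small-rank exceptions in types $B_2, C_2, BC_2$, where $k_G = \max(4, 2 + (\eta_1+\eta_2)/(2\eta_0))$ reflects a competition between two distinct extremal subsystems, form the most delicate transition and will require a separate direct verification.
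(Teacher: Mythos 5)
Your reduction to the finiteness of (\ref{Int}) and the idea of a case analysis over the possible annihilating subsystems $\Phi(a)$ are consistent with the paper, but the proposal stops short of the actual content of the theorem: every one of the stated values of $k_G$ comes out of explicit pointwise bounds on the products $P_{G/K}(\lambda,k,a)$ of (\ref{Pnka}) (these are Lemmas \ref{LAD} and \ref{LBC}), and none of those bounds is established here. More seriously, the one concrete analytic device you do propose --- integrating first over $V^{\perp}$, where $V$ is the span of $\Phi(a)$, and then claiming that ``the residual decay coming from the $V$-projections of the non-annihilating roots'' controls the growth $\prod_{\alpha\in\Phi^{+}(a)}(1+|\langle\lambda',\alpha\rangle|)^{m_{\alpha}}$ on $V$ --- does not work as stated. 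A single factor $(1+|\langle\lambda'+\lambda'',\alpha\rangle|)^{m_{\alpha}(1-k)}$ yields \emph{no} decay in $\lambda'$ after the $\lambda''$-integration, since a translation of $\lambda''$ absorbs the $V$-component of the pairing; decay on $V$ can only be extracted by playing several non-annihilating roots with linearly dependent $V^{\perp}$-projections against one another, and quantifying how much decay survives in each direction of $V$ is exactly the hard point. You defer this to ``bookkeeping,'' but it is where the theorem lives, and it is not evident that your scheme reproduces the stated thresholds (for instance the value $\max(4,\,2+(\eta_1+\eta_2)/(2\eta_0))$ in rank $2$).

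For comparison, the paper avoids the $V\oplus V^{\perp}$ splitting entirely. It first reduces to the case where $\Phi^{+}(a)$ is the positive part of a \emph{maximal} root subsystem $\Psi$ (enlarging the annihilator only increases $P_{G/K}$), and then uses two devices you do not have: for types $A_{n}$ and $D_{n}$ it imports the inequality (\ref{Group}) from the compact-group setting, which bounds $\prod_{\alpha\in(\Psi^{+})^{c}}(1+|\langle\lambda,\alpha\rangle|)^{-1}$ by $\prod_{\alpha\in\Phi^{+}}(1+|\langle\lambda,\alpha\rangle|)^{-s}$ uniformly over all maximal $\Psi$, so that only a count of roots with $\langle\lambda,\alpha\rangle\gtrsim\|\lambda\|$ remains; for types $B_{n},C_{n},BC_{n}$ it runs an induction on the rank, factoring $P_{n}(\lambda,k,a)$ as $P_{n-1}(\lambda',k,a')$ times the contribution of the roots involving $e_{1}$, and treating the three families of maximal subsystems separately. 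Either device would be a concrete replacement for the missing step in your argument. Finally, the theorem allows distinct $a_{1},\dots,a_{k}$; your integral $I(a,k)$ only covers powers of a single measure, and you need the generalized H\"older inequality, as in Corollary \ref{Creg}, to pass to the general case.
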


\begin{remark}
We remark that the symmetric spaces of Lie type $A_{n},$ $(B)C_{n}$ or $%
D_{n} $ have rank $n$ and dimension $O(n(n+\eta _{1}+\eta _{2}))$. Note that
for type $\left( B\right) C_{n}$ we can assume $n\geq 2$ as the regular
orbital measure case has already be done.
\end{remark}

The key to the proof of this theorem is finding bounds for the products%
\begin{equation}
P_{G/K}^{w}(\lambda ,k,a)=\prod_{\alpha \in \Phi ^{+}(w(a))^{c}}\left\vert
1+\langle \lambda ,\alpha \rangle \right\vert ^{-m_{\alpha }k}\prod_{\alpha
\in \Phi ^{+}}\left\vert 1+\langle \lambda ,\alpha \rangle \right\vert
^{m_{\alpha }},  \label{Pnka}
\end{equation}%
and 
\begin{equation*}
P_{G/K}(\lambda ,k,a)=\max_{w\in W}P_{G/K}^{w}(\lambda ,k,a)
\end{equation*}%
for $\lambda \in \overline{\mathfrak{a}^{\ast +}}$ since we have already
seen in (\ref{BdPhi}) that%
\begin{equation*}
\left( \left\vert \phi _{\lambda }(a)\right\vert ^{k}|c(\lambda
)|^{-1}\right) ^{2}\leq C_{a}P_{G/K}(\lambda ,k,a).
\end{equation*}%
This will be mainly accomplished in two lemmas. We will again write $C$ for
a positive constant (depending only on $G/K$ and $a$) that may change
throughout the proof. We begin with the symmetric spaces of Lie type $A_{n}$
or $D_{n}$. These are easier as all roots have the same multiplicity.

\begin{lemma}
\label{LAD}Suppose $G/K$ is Lie type $A_{n-1}$ or $D_{n}$ and $a\in A_{0}$.
There is a constant $C$ such that 
\begin{equation*}
P_{G/K}(\lambda ,k,a)\leq C\min (1,\left\Vert \lambda \right\Vert ^{-\eta
_{0}p_{k})})
\end{equation*}%
for all integers $k\geq n-1$ and $\lambda \in \overline{\mathfrak{a}^{\ast +}%
},$ where%
\begin{equation*}
p_{k}=p_{k}(G/K)=\left\{ 
\begin{array}{cc}
k-n+1 & \text{ for }G/K\text{ type }A_{n-1} \\ 
2(k-n+1) & \text{for }G/K\text{ type }D_{n}%
\end{array}%
\right. .
\end{equation*}
\end{lemma}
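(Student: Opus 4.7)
The plan is to extend the pigeonhole analysis of Lemma \ref{L:reg} to the non-regular case. Since all roots in types $A_{n-1}$ and $D_{n}$ share the common multiplicity $\eta_{0}$, I would begin by factoring
\[
P_{G/K}^{w}(\lambda,k,a)=\prod_{\alpha\in\Phi^{+}(w(a))}(1+\langle\lambda,\alpha\rangle)^{\eta_{0}}\cdot\prod_{\alpha\in(\Phi^{+}(w(a)))^{c}}(1+\langle\lambda,\alpha\rangle)^{-\eta_{0}(k-1)}.
\]
A useful preliminary observation is that $|\Phi^{+}(w(a))|=|\Phi^{+}(a)|$ for every $w\in W$: since $\Phi(a)=-\Phi(a)$, the Weyl translate $w(\Phi(a))$ is also symmetric under $\alpha\mapsto-\alpha$, hence splits evenly between positive and negative roots in every chamber. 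Write $N(a)$ for this common cardinality.

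For $\|\lambda\|\leq 1$ the bound $P_{G/K}^{w}\leq C$ is immediate, so I restrict to $\|\lambda\|\geq 1$. Adapting Lemma \ref{L:reg}, I would introduce $T_{\lambda}=\{\alpha\in\Phi^{+}:\langle\lambda,\alpha\rangle\geq c_{G}\|\lambda\|\}$ with a suitable $c_{G}>0$, and write $u_{\pm}$ for the cardinalities of $T_{\lambda}\cap\Phi^{+}(w(a))$ and $T_{\lambda}\cap(\Phi^{+}(w(a)))^{c}$ respectively. A factor-by-factor estimate of the two products—splitting the roots according to whether they lie in $T_{\lambda}$ and using that a large $y_{\alpha}=1+\langle\lambda,\alpha\rangle$ appearing with positive exponent in $\Phi^{+}(w(a))$ forces matching large $y_{\beta}$ on transversal roots in $(\Phi^{+}(w(a)))^{c}$—should yield
\[
P_{G/K}^{w}(\lambda,k,a)\leq C\|\lambda\|^{\eta_{0}(u_{+}-(k-1)u_{-})}.
\]
The target decay then reduces to the combinatorial inequality
\[
u_{-}\geq\frac{|T_{\lambda}|+p_{k}}{k}\qquad\text{uniformly in }w\in W.
\]

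The remaining task, and the main obstacle, is verifying this combinatorial inequality for each Lie type. For $A_{n-1}$, sorting $\lambda_{1}\geq\cdots\geq\lambda_{n}$ and pigeonholing on the gaps produces an index $\ell$ with $\lambda_{\ell}-\lambda_{\ell+1}\geq\|\lambda\|/(n-1)$, so $T_{\lambda}$ contains the $\ell(n-\ell)\geq n-1$ cross-wall roots $e_{i}-e_{j}$ with $i\leq\ell<j$. The translate $w(\Phi(a))$ corresponds to a relabelling of the partition of $\{1,\ldots,n\}$ associated to $a$, and a cross-wall root belongs to $\Phi^{+}(w(a))$ precisely when its endpoints share a common block. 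A convexity bookkeeping on block placements—exploiting that $\Phi(a)$, being proper, has rank at most $n-2$—bounds the number of cross-wall roots absorbed by $\Phi^{+}(w(a))$ by the required threshold, first at the boundary $k=n-1$ and then for all $k\geq n-1$ by monotonicity. The case of $D_{n}$ is treated analogously using roots $e_{i}\pm e_{j}$ with up to two pigeonhole walls (one from gaps in ordered $\lambda$-coordinates and one from gaps in their absolute values), and with an extra transverse direction available since proper subsystems of $D_{n}$ have rank at most $n-1$. The most delicate configurations are those in which $N(a)$ is nearly maximal, as there the margin in the combinatorial inequality is thinnest; the specific values of $p_{k}$ stated in the lemma are calibrated precisely to these extremal cases.
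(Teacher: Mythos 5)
Your reduction has the right general shape, but as written it contains both an unjustified step and, more importantly, a hole exactly where the content of the lemma lies. First, the intermediate estimate $P^{w}_{G/K}(\lambda,k,a)\leq C\Vert\lambda\Vert^{\eta_{0}(u_{+}-(k-1)u_{-})}$ does not follow from a factor-by-factor estimate: an annihilating root $\alpha$ with $\langle\lambda,\alpha\rangle$ just below the threshold $c_{G}\Vert\lambda\Vert$ lies outside $T_{\lambda}$ yet still contributes a factor of order $\Vert\lambda\Vert^{\eta_{0}}$ to the positive-exponent product, so what you can actually extract this way is the weaker exponent $\eta_{0}(|\Phi^{+}(w(a))|-(k-1)u_{-})$, with the full count of annihilating positive roots in place of $u_{+}$. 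Your parenthetical remark that a large annihilated factor ``forces matching large factors on transversal roots'' is precisely the nontrivial assertion needed to repair this; it is essentially the content of inequality (3.1) of \cite{HWY}, which cannot be treated as bookkeeping. Second, you explicitly defer ``the main obstacle'' --- the uniform combinatorial inequality $(k-1)u_{-}\geq u_{+}+p_{k}$ over all $w\in W$ and all proper annihilator subsystems --- to an unexecuted ``convexity bookkeeping on block placements''. That inequality (in its corrected form) is the entire theorem, and the extremal configurations (e.g.\ $\Phi^{+}(a)$ of type $A_{n-2}$ inside $A_{n-1}$, where it holds with equality) leave no slack, so a sketch does not suffice. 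A smaller but genuine error: proper root subsystems of $D_{n}$ can have full rank $n$ (e.g.\ $D_{k}\times D_{n-k}$), so the ``extra transverse direction'' you invoke for type $D_{n}$ is not available in general.

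For comparison, the paper avoids all of this combinatorics. Since every restricted root has the common multiplicity $\eta_{0}$, it raises the multiplicity-one estimate of \cite{HWY} (valid for any maximal root subsystem $\Psi^{+}\supseteq\Phi^{+}(w(a))$, whose complement only shrinks the negative-exponent product) to the power $\eta_{0}$, obtaining $P^{w}_{G/K}(\lambda,k,a)\leq C\prod_{\alpha\in\Phi^{+}}|1+\langle\lambda,\alpha\rangle|^{(1-ks)\eta_{0}}$ with $s=1/(n-1)$, and then reuses the count $q=\min_{\lambda}|U_{\lambda,0}|$ already established in Lemma \ref{L:reg} ($q=n-1$ for $A_{n-1}$ and $q=2(n-1)$ for $D_{n}$) to convert this into $C\Vert\lambda\Vert^{(1-ks)\eta_{0}q}=C\Vert\lambda\Vert^{-\eta_{0}p_{k}}$. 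To make your route work you would in effect have to reprove that cited inequality from scratch.
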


\begin{proof}
There is a constant $C$ such that $P_{G/K}^{w}(\lambda ,k,a)\leq C$ if $%
\left\Vert \lambda \right\Vert \leq 1,$ thus our interest is with $%
\left\Vert \lambda \right\Vert \geq 1$.

In \cite{HWY} the analogous problem was studied for the invariant measures
supported on conjugacy classes in the classical simple compact Lie groups.
Specifically, in (3.1) of \cite{HWY}, it was shown that if $\mathcal{G}$ is
a compact Lie group, $X^{+}$ is the set of positive roots for the Lie
algebra associated with $\mathcal{G}$ and $Y^{+}$ is the set of positive
roots of some maximal root subsystem, then for all representations $\lambda $%
, 
\begin{equation}
\prod_{\alpha \in (Y^{+})^{c}}\left\vert 1+\langle \lambda ,\alpha \rangle
\right\vert ^{-1}\leq C\prod_{\alpha \in X^{+}}\left\vert 1+\langle \lambda
,\alpha \rangle \right\vert ^{-s}  \label{Group}
\end{equation}%
where $s=1/(n-1)$ if $\mathcal{G}$ is Lie type $A_{n-1}$ or $D_{n}$. Athough
this was formally shown only for all representations of $\mathcal{G}$, the
same reasoning gives the same bound for all $\lambda \in \overline{\mathfrak{%
a}^{\ast +}}$ with $\left\Vert \lambda \right\Vert \geq 1$.

Consider the compact Lie group $\mathcal{G}$ with the same root system $\Phi 
$ as the restricted root system of $G/K$ (although, with all roots having
multiplicity two, rather than $\eta _{0}$). For any $a\in A_{0}$ and $w\in
W, $ the set of positive annihilating roots of $w(a)$ is contained in the
set of positive roots of a maximal root subsystem of $\Phi ,$ say $\Psi ^{+}$%
. Appealing to (\ref{Group}) we deduce that if $\left\Vert \lambda
\right\Vert \geq 1,$ then%
\begin{eqnarray*}
P_{G/K}^{w}(\lambda ,k,a) &\leq &\left( \prod_{\alpha \in (\Psi
^{+})^{c}}\left\vert 1+\langle \lambda ,\alpha \rangle \right\vert
^{-k}\prod_{\alpha \in \Phi ^{+}}\left\vert 1+\langle \lambda ,\alpha
\rangle \right\vert \right) ^{\eta _{0}} \\
&\leq &C\prod_{\alpha \in \Phi ^{+}}\left\vert 1+\langle \lambda ,\alpha
\rangle \right\vert ^{(1-ks)\eta _{0}}
\end{eqnarray*}%
(for the appropriate choice of $s$). Hence, if we let $q$ be the minimal
number of positive roots $\alpha $ (not counting multiplicity) such that $%
\langle \lambda ,\alpha \rangle \geq c_{G}\left\Vert \lambda \right\Vert $
for such $\lambda ,$ with $c_{G}>0$ as in (\ref{T}), then $P_{G/K}(\lambda
,k,a)\leq C\left\Vert \lambda \right\Vert ^{(1-ks)\eta _{0}q}$. In the
notation of (\ref{U}), $q=$ $\min_{\lambda }\left\vert U_{\lambda
,0}\right\vert .$ Thus $q(A_{n-1})=n-1$ and $q(D_{n})=$ $2(n-1)${\LARGE .}
Inputting the values for $s$ and $q$ gives the desired result.
\end{proof}

\begin{lemma}
\label{LBC}Suppose $G/K$ is Lie type $B_{n}$, $C_{n}$ or $BC_{n},$ $\lambda
\in \overline{\mathfrak{a}^{\ast +}}$ and $a\in A_{0}$.

(i) If $n\geq 3,$ there is a constant $C_{n}$ such that if integer $k\geq
\kappa _{n}:=2(n-1)+(\eta _{1}+\eta _{2})/\eta _{0}$, then 
\begin{equation}
P_{G/K}(\lambda ,k,a)\leq C_{n}\min (1,\left\Vert \lambda \right\Vert ^{\eta
_{0}(2(n-1)-k)+\eta _{1}+\eta _{2}})\text{ }.  \label{Claim1}
\end{equation}%
(ii) Suppose $n=2,$ $m=\min (\eta _{0},\eta _{1}+\eta _{2})$ and $\ M=\max
(\eta _{0},\eta _{1}+\eta _{2})$. Then if integer $k\geq \kappa _{2}=1+M/2m,$
\begin{equation}
P_{G/K}(\lambda ,k,a)\leq C_{2}\min (1,\max (\left\Vert \lambda \right\Vert
^{2m(1-k)+M})\text{ .}  \label{Claim2}
\end{equation}
\end{lemma}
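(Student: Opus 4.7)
The plan is to mimic Lemma \ref{LAD}: bound $P^w_{G/K}(\lambda,k,a)$ uniformly in $w \in W$ by first comparing the product over $(\Psi^+)^c := \Phi^+(w(a))^c$ to a full product over $\Phi^+$, and then extracting polynomial decay via the counting argument from Lemma \ref{L:reg}. I would reduce to $\|\lambda\| \geq 1$, as the other case is trivial. Fix $w \in W$ and set $\Psi = \Phi(w(a))$; since $a \in A_0$, $\Psi$ is a proper subsystem of $\Phi$, and in particular cannot contain every short root in $S_1 = \{e_i\}$ nor every long root in $S_2 = \{2e_i\}$ (where present).

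For part (i), assume $n \geq 3$. Decompose $\Phi^+ = S_0 \cup S_1 \cup S_2$ by multiplicity and invoke the compact-group inequality (\ref{Group}) of \cite{HWY} separately on the standard-root piece, with the appropriate value of $s$ for types $B_n$, $C_n$, or $BC_n$ (the analogue of the $s = 1/(n-1)$ used for $D_n$ in Lemma \ref{LAD}). For the short- and long-root pieces, I would bound $\prod_{\alpha \in (\Psi^+)^c \cap S_j}|1+\langle\lambda,\alpha\rangle|^{-\eta_j k}$ directly by $C\|\lambda\|^{-\eta_j k}$, using the observation from the proof of Lemma \ref{L:reg} that at least one root in $S_j$ satisfies $\langle\lambda,\alpha\rangle \geq c_G\|\lambda\|$. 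Combining this with the prefactor $\prod_{\alpha \in \Phi^+}|1+\langle\lambda,\alpha\rangle|^{m_\alpha}$ and the standard-root count $|U_{\lambda,0}| \geq 2(n-1)$ produces the claimed exponent $\eta_0(2(n-1) - k) + \eta_1 + \eta_2$.

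For part (ii), $n = 2$, the compact-group inequality is too crude, since $2(n-1) = 2$ and the exponents $\eta_0$ and $\eta_1+\eta_2$ may differ sharply. Instead, I would enumerate the proper root subsystems of $B_2$, $C_2$, $BC_2$ arising as $\Phi(w(a))$: these are generated either by a single rank-one subsystem (one of $\{\pm(e_1 \pm e_2)\}$, $\{\pm e_i\}$, or $\{\pm 2e_i\}$) or by two mutually orthogonal such generators. For each $\Psi$ and each $\lambda \in \overline{\mathfrak{a}^{\ast +}}$, I would directly lower-bound the multiplicity-weighted count of roots $\alpha \in (\Psi^+)^c$ with $\langle\lambda,\alpha\rangle \geq c\|\lambda\|$. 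Taking the worst case over all admissible $\Psi$ and $\lambda$ yields the exponent $2m(1-k) + M$ after cancellation with $\prod_\alpha|1+\langle\lambda,\alpha\rangle|^{m_\alpha}$, which explains the symmetric form in $m$ and $M$.

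The main obstacle is case (ii): the asymmetry between $\eta_0$ and $\eta_1+\eta_2$ means that different rank-one $\Psi$ dominate depending on which of the two multiplicities is larger, so the enumeration must verify the bound uniformly in both regimes. A secondary technical point is that the HWY inequality (\ref{Group}) is stated for weights of a compact Lie group, but, as already done implicitly in the proof of Lemma \ref{LAD}, the same reasoning extends to all $\lambda \in \overline{\mathfrak{a}^{\ast +}}$.
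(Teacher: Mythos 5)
Your part (ii) is close in spirit to what the paper actually does for the rank-two base case (a direct enumeration of the maximal annihilating subsystems of $BC_{2}$ --- the $BC_{1}$'s generated by $(2)e_{1}$ or $(2)e_{2}$ and the $A_{1}$'s generated by $e_{1}\pm e_{2}$ --- split according to whether $\lambda _{2}\leq \lambda _{1}/2$). But part (i) has a genuine gap. The step ``bound $\prod_{\alpha \in (\Psi ^{+})^{c}\cap S_{j}}\left\vert 1+\langle \lambda ,\alpha \rangle \right\vert ^{-\eta _{j}k}$ by $C\left\Vert \lambda \right\Vert ^{-\eta _{j}k}$ because at least one root of $S_{j}$ pairs largely with $\lambda $'' is false: the root of $S_{j}$ exhibited in Lemma \ref{L:reg} is $e_{1}$ (or $2e_{1}$), and nothing prevents it from being an annihilating root of $w(a)$. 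Concretely, take $w(a)=(0,\dots ,0,a_{n})$ and $\lambda =\lambda _{1}e_{1}$: then $(\Psi ^{+})^{c}\cap S_{1}=\{e_{n}\}$, $(\Psi ^{+})^{c}\cap S_{2}=\{2e_{n}\}$, and $\langle \lambda ,e_{n}\rangle =0$, so the short and long roots contribute no decay whatsoever, while $e_{1},2e_{1}\in \Phi ^{+}$ still inflate the prefactor by $\lambda _{1}^{\eta _{1}+\eta _{2}}$. This is precisely why the exponent in (\ref{Claim1}) carries $+(\eta _{1}+\eta _{2})$ rather than $(\eta _{1}+\eta _{2})(1-k)$: in the worst case all the decay comes from a single standard root $e_{1}\pm e_{n_{0}}$.

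The HWY half of your plan also does not deliver (\ref{Claim1}). Inequality (\ref{Group}) compares products with a \emph{uniform} exponent, so with unequal multiplicities you can only apply it to the $D_{n}$ subsystem $S_{0}$; doing so gives $\prod_{S_{0}^{+}}\left\vert 1+\langle \lambda ,\alpha \rangle \right\vert ^{\eta _{0}(1-k/(n-1))}\leq C\left\Vert \lambda \right\Vert ^{\eta _{0}(2n-3)(1-k/(n-1))}$, and once the $S_{1}\cup S_{2}$ prefactor is (correctly) bounded by $\lambda _{1}^{n(\eta _{1}+\eta _{2})}$, the resulting integrability threshold grows like $n(\eta _{1}+\eta _{2})/(2\eta _{0})$ rather than the $(\eta _{1}+\eta _{2})/\eta _{0}$ appearing in $\kappa _{n}$; for Cartan class $CII$ with $p\gg n$ this is strictly weaker than the lemma. (The remark following Theorem \ref{TMain} makes the same point: the method of Lemma \ref{LAD} is only competitive when all multiplicities coincide.) The paper instead argues by induction on the rank: it lists the maximal annihilating subsystems of $(B)C_{n}$ (types $BC_{n-1}$, $A_{n-1}$, $BC_{n-j}\times A_{j-1}$), peels off the roots involving $e_{1}$, bounds those factors by hand using $\langle \lambda ,e_{1}+e_{n_{0}}\rangle \geq c\lambda _{1}$, and absorbs everything else into $P_{n-1}(\lambda ^{\prime },k,a^{\prime })$ for a reduced rank-$(n-1)$ symmetric space, which is uniformly bounded by the induction hypothesis (with your part (ii) supplying the base case). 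Some version of this reduction, or an equally careful direct case analysis over the annihilating subsystems, is needed to recover the stated exponent.
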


\begin{proof}
As noted previously, we obviously have $P_{G/K}(\lambda ,k,a)$ uniformly
bounded when $\left\Vert \lambda \right\Vert \leq 1$. Moreover, when $n\geq
3 $ and integer $k\geq \kappa _{n},$ then $\eta _{0}(2(n-1)-k)+\eta
_{1}+\eta _{2}\leq 0$ and when $k\geq \kappa _{2},$ $2m(1-k)+M\leq 0$. Thus
the task is to check that $P_{G/K}(\lambda ,k,a)\leq C_{n}\left\Vert \lambda
\right\Vert ^{\eta _{0}(2(n-1)-k)+\eta _{1}+\eta _{2}}$ when $n\geq 3$ and
the corresponding statement of (ii) when $n=2$.

Our proof of (i) will proceed by induction on $n$. We will leave the
arguments for the base case until the end when it will be done in
conjunction with the proof of (ii).

We will give the proof for type $BC_{n},$ but the modifications for the
other types are essentially notational. For the induction argument, it will
is natural to write $P_{n}(\lambda ,k,a)$ rather than $P_{G/K}(\lambda ,k,a)$
when the rank of $G/K$ is $n$.

Let $a\in A_{0}$. Since $\Phi ^{+}(w(a))$ is a proper root subsystem, in
bounding $P_{n}(\lambda ,k,a)$ we may as well assume $\Phi ^{+}(w(a))=\Psi
^{+},$ where $\Psi $ is one of the finitely many maximal root subsystems,
and that $w=id$.

The maximal root subsystems of a symmetric space of Lie type $BC_{n}$ are:
(a) Lie type $BC_{n-1}$, (b) Lie type $A_{n-1}$ and (c) Lie types $%
BC_{n-j}\times A_{j-1}$ with $n-j\geq 1,$ $j\geq 2$.

Any spherical representation in $BC_{n}$ can be written as $\lambda
=\sum_{i=1}^{n}\lambda _{i}e_{i}$ where $\lambda _{i}$ are non-increasing,
non-negative integers. Thus $\lambda _{1}\leq $ $\left\Vert \lambda
\right\Vert \leq n\lambda _{1}$ and, consequently,%
\begin{equation}
\prod_{\alpha \in \Phi ^{+}}\left\vert 1+\langle \lambda ,\alpha \rangle
\right\vert ^{m_{\alpha }}\leq C\lambda _{1}^{2\binom{n}{2}\eta _{0}+n(\eta
_{1}+\eta _{2})}.  \label{All}
\end{equation}%
\medskip

We now consider the three cases of maximal annihilating root subsystems
separately.

\medskip

Case (a) $\Psi $ is of type $BC_{n-1}$: That means there is some index $%
n_{0}\in \{1,...,n\}$ such that 
\begin{equation*}
\Psi ^{+}=\{e_{i}\pm e_{j},e_{k},2e_{k}:1\leq i<j\leq n,\text{ }i,j,k\neq
n_{0}\},
\end{equation*}%
and hence 
\begin{equation*}
(\Psi ^{+})^{c}=\{e_{n_{0}}\pm e_{j},e_{n_{0}},2e_{n_{0}}:j\neq n_{0}\}
\end{equation*}%
(where $e_{n_{0}}-e_{j}$ should be replaced by $e_{j}-e_{n_{0}}$ if $j<n_{0}$%
).

If $n_{0}=1,$ then as $1+\langle \lambda ,e_{1}+e_{j}\rangle \geq \lambda
_{1}$ for all $j=2,...,n$ and $1+\langle \lambda ,(2)e_{1}\rangle \geq
\lambda _{1},$ we see that 
\begin{equation*}
\prod_{\alpha \in (\Psi ^{+})^{c}}\left\vert 1+\langle \lambda ,\alpha
\rangle \right\vert ^{m_{\alpha }}\geq \lambda _{1}^{(n-1)\eta _{0}+\eta
_{1}+\eta _{2}}.
\end{equation*}%
Thus for such $a,$%
\begin{equation}
P_{n}(\lambda ,k,a)\leq \lambda _{1}^{(n-1)\eta _{0}(n-k)+(\eta _{1}+\eta
_{2})(n-k)}  \label{Case1a}
\end{equation}%
and that's dominated by the right hand side of (\ref{Claim1}) when $k\geq
\kappa _{n}$.

\smallskip

So assume $n_{0}\neq 1$.\ Here we will use an induction argument assuming
the statement holds for $n-1$. (Actually, all we will need to inductively
assume is that $P_{n-1}(\lambda ,k,a)$ is uniformly bounded for $k\geq
\kappa _{n}$ and the claims of the lemma certainly ensure this.)

We consider the root subsystem 
\begin{equation*}
\Phi ^{\prime }=\{e_{i}\pm e_{j},e_{k},2e_{k}:2\leq i\neq j\leq n,2\leq
k\leq n\}\subseteq \Phi ,
\end{equation*}%
with the same multiplicities. This can be viewed as the restricted root
system of the same Cartan class as $G/K$, but with rank $n-1$. For instance,
if $G/K$ is of Cartan class $AIII,$ so that 
\begin{equation*}
G/K=SU(p,n)/SU(p)\times SU(n)
\end{equation*}%
for some $p>n$, then $\Phi ^{\prime }$ is the restricted root system of the
symmetric space 
\begin{equation*}
SU(p-1,n-1)/SU(p-1)\times SU(n-1),
\end{equation*}%
of Cartan class $AIII$, Lie type $BC_{n-1}$. For the purposes of this proof,
we will call this the `reduced symmetric space'. We remark that the reduced
symmetric space has rank $n-1$ and that the multiplicities of the roots are
unchanged.

By identifying $a\in A_{0}$ with $X_{a}\in \mathfrak{a}$, we can assume $%
a=\sum_{i=1}^{n}a_{i}e_{i}$. We let $a^{\prime }=\sum_{i=2}^{n}a_{i}e_{i}$
(understood, appropriately, as an element in the reduced symmetric space)
and observe that the annihilating root system of $a^{\prime }$ is of type $%
BC_{n-2}$.

Put $\lambda ^{\prime }=\sum_{i=2}^{n}\lambda _{i}e_{i},$ so that for $%
\alpha \in \Phi ^{\prime },$ $\langle \alpha ,\lambda ^{\prime }\rangle
=\langle \alpha ,\lambda \rangle $. An elementary, but useful, observation
is that $\Phi ^{+}(a)^{c}$ consists of the union of the non-annihilating
positive roots of $a$ that belong to $\Phi ^{\prime }$ together with those
non-annihilating positive roots that do not belong to $\Phi ^{\prime }$,
namely $e_{1}\pm e_{n_{0}}$. Moreover, the non-annihilating roots which are
in $\Phi ^{\prime }$ are precisely the non-annihilating roots of $a^{\prime
} $. Thus 
\begin{equation*}
P_{n-1}(\lambda ^{\prime },k,a^{\prime })=\prod_{\alpha \in (\Psi
^{+})^{c}\cap \Phi ^{\prime +}}\left\vert 1+\langle \lambda ,\alpha \rangle
\right\vert ^{-m_{\alpha }k}\prod_{\alpha \in \Phi ^{\prime +}}\left\vert
1+\langle \lambda ,\alpha \rangle \right\vert ^{m_{\alpha }}.
\end{equation*}%
Since $\langle \lambda ,e_{1}+e_{n_{0}}\rangle \geq c\lambda _{1}$ and the
induction assumption ensures that $P_{n-1}(\lambda ^{\prime },k,a^{\prime })$
is bounded independently of $\lambda ^{\prime }$ and $k,$ we see that 
\begin{eqnarray}
P_{n}(\lambda ,k,a) &=&P_{n-1}(\lambda ^{\prime },k,a^{\prime
})\prod_{\alpha \in (\Psi ^{+})^{c}\diagdown \Phi ^{\prime +}}\left\vert
1+\langle \lambda ,\alpha \rangle \right\vert ^{-m_{\alpha }k}\prod_{\alpha
\in \Phi ^{+}\diagdown \Phi ^{\prime +}}\left\vert 1+\langle \lambda ,\alpha
\rangle \right\vert ^{m_{\alpha }}  \notag \\
&\leq &P_{n-1}(\lambda ^{\prime },k,a^{\prime })\prod_{\alpha =e_{1}\pm
e_{n_{0}}}\left\vert 1+\langle \lambda ,\alpha \rangle \right\vert
^{-m_{\alpha }k}\prod_{\substack{ \alpha =e_{1}\pm e_{j},j=2,...,n  \\ %
e_{1},2e_{1}}}\left\vert 1+\langle \lambda ,\alpha \rangle \right\vert
^{m_{\alpha }}  \notag \\
&\leq &P_{n-1}(\lambda ^{\prime },k,a^{\prime })\lambda _{1}^{\eta
_{0}(2(n-1)-k)+\eta _{1}+\eta _{2}}\leq C\lambda _{1}^{\eta
_{0}(2(n-1)-k)+\eta _{1}+\eta _{2}}.  \label{Case1b}
\end{eqnarray}

\medskip

Case (b) $\Psi $ is of type $A_{n-1}$: Hence $\Psi
^{+}=\{s_{i}e_{i}-s_{j}e_{j}:1\leq i<j\leq n\}$ for some choice of $%
s_{i}=\pm 1$. We define $\Phi ^{\prime },a^{\prime },\lambda ^{\prime }$ as
above, so that $\Phi ^{\prime }$ is type $BC_{n-1}$ and the subset of
annihilating roots of $a^{\prime }$ is of type $A_{n-2}$. Again we factor $%
P_{n}(\lambda ,k,a)$ and use the fact that $P_{n-1}(\lambda ^{\prime
},k,a^{\prime })$ is bounded to see that 
\begin{eqnarray*}
P_{n}(\lambda ,k,a) &=&P_{n-1}(\lambda ^{\prime },k,a^{\prime
})\prod_{\alpha \in (\Psi ^{+})^{c}\diagdown \Phi ^{\prime +}}\left\vert
1+\langle \lambda ,\alpha \rangle \right\vert ^{-m_{\alpha }k}\prod_{\alpha
\in \Phi ^{+}\diagdown \Phi ^{\prime +}}\left\vert 1+\langle \lambda ,\alpha
\rangle \right\vert ^{m_{\alpha }} \\
&\leq &P_{n-1}(\lambda ^{\prime },k,a^{\prime })\prod_{\substack{ \alpha
=s_{1}e_{1}+s_{j}e_{j},  \\ (2)e_{1}}}\left\vert 1+\langle \lambda ,\alpha
\rangle \right\vert ^{-m_{\alpha }k}\prod_{\substack{ \alpha =\varepsilon
_{1}\pm e_{j},  \\ (2)e_{1}}}\left\vert 1+\langle \lambda ,\alpha \rangle
\right\vert ^{m_{\alpha }} \\
&\leq &C\prod_{\alpha =s_{1}e_{1}+s_{j}e_{j},(2)e_{1}}\left\vert 1+\langle
\lambda ,\alpha \rangle \right\vert ^{-m_{\alpha }k}\cdot \lambda
_{1}^{2(n-1)\eta _{0}+\eta _{1}+\eta _{2}}.
\end{eqnarray*}%
There is no loss in assuming $s_{1}=1,$ thus%
\begin{equation*}
\prod_{\alpha =s_{1}e_{1}+s_{j}e_{j},(2)e_{1}}\left\vert 1+\langle \lambda
,\alpha \rangle \right\vert ^{-m_{\alpha }k}\leq \lambda
_{1}^{-\#\{j>1:s_{j}=1\}\eta _{0}k}\lambda _{1}^{-(\eta _{1}+\eta _{2})k}.
\end{equation*}%
If there is at least one $j>1$ such that $s_{j}=1,$ then we have 
\begin{equation}
P_{n}(\lambda ,k,a)\leq C\lambda _{1}^{\eta _{0}(2(n-1)-k)+(\eta _{1}+\eta
_{2})(1-k)},  \label{Case2a}
\end{equation}%
agreeing with (\ref{Claim1}).

So assume $s_{j}=-1$ for all $j>1$. We note that if $\alpha =e_{1}-e_{j},$
then $\left\vert 1+\langle \lambda ,\alpha \rangle \right\vert =1+\lambda
_{1}-\lambda _{j},$ so if there is some $j$ with $\lambda _{j}\leq \lambda
_{1}/2,$ then $\left\vert 1+\langle \lambda ,\alpha \rangle \right\vert \geq
\lambda _{1}/2$. Hence 
\begin{equation*}
\prod_{\alpha \in s_{1}e_{1}+s_{j}e_{j},(2)e_{1}}\left\vert 1+\langle
\lambda ,\alpha \rangle \right\vert ^{-m_{\alpha }k}\leq C\lambda
_{1}^{-\eta _{0}k-(\eta _{1}+\eta _{2})k}
\end{equation*}%
and we can obtain the same bound on $P_{n}(k,\lambda ,a)$ as in (\ref{Case2a}%
) (with a different choice of constant).

Thus we can also assume $\lambda _{j}\geq \lambda _{1}/2$ for all $j>1$.
Then we give a direct argument, rather than appealing to induction. The
choice of $s_{1}=1$ and $s_{j}=-1$ for all $j\neq 1$ means that%
\begin{equation*}
\Phi ^{+}(a)^{c}=\{e_{1}-e_{k},e_{i}+e_{j},(2)e_{t}:2\leq i<j\leq n,\text{ }%
k\geq 2,t\geq 1\}.
\end{equation*}%
Furthermore, $\left\vert 1+\langle \lambda ,e_{i}+e_{j}\rangle \right\vert
\geq \lambda _{i}+\lambda _{j}\geq \lambda _{1}$ for all $i,j\geq 2$ and
similarly $\left\vert 1+\langle \lambda ,(2)e_{t}\rangle \right\vert \geq
\lambda _{1}/2$ for all $t\geq 1$. Thus%
\begin{equation*}
\prod_{\alpha \in (\Psi ^{+})^{c}}\left\vert 1+\langle \lambda ,\alpha
\rangle \right\vert ^{m_{\alpha }}\geq \lambda _{1}^{\binom{n-1}{2}\eta
_{0}+n(\eta _{1}+\eta _{2})}.
\end{equation*}%
Coupled with (\ref{All}), this gives%
\begin{equation}
P_{n}(\lambda ,k,a)\leq C\lambda _{1}^{\eta _{0}\left( 2\binom{n}{2}-k\binom{%
n-1}{2}\right) +n(\eta _{1}+\eta _{2})(1-k)}.  \label{Case2b}
\end{equation}

It is routine to check that this implies that the claim of the lemma holds.

\medskip

Case (c) $\Psi $ is of type $BC_{n-j}\times A_{j-1}$ with $2\leq j\leq n-1$:
In this situation, there are disjoint sets of indices, $I,J\subseteq
\{1,...,n\}$ where $\left\vert I\right\vert =n-j$, $\left\vert J\right\vert
=j\geq 2,$ and a choice $s_{t}=\pm 1$ for $t\in J$ such that 
\begin{equation*}
\Psi ^{+}=\{e_{i}\pm e_{j},(2)e_{t}:i<j,t\in I\}\cup
\{s_{i}e_{i}-s_{j}e_{j}:i<j\in J\}.
\end{equation*}

We set up the usual induction/factoring argument. If $1\in I,$ then set of
annihilating roots of $a^{\prime }$ is type $BC_{n-j-1}\times A_{j-1}$ in
the reduced symmetric space of type $BC_{n-1}$ (or type $A_{n-2}$ in $%
BC_{n-1}$ if $j=n-1$). Under this assumption, $(\Psi ^{+})^{c}\diagdown \Phi
^{\prime }$ contains all the roots $a=e_{1}+e_{j}$ for $j\in J,$ and for
such $\alpha $ we have $\langle \alpha ,\lambda \rangle \geq \lambda _{1}$.
As $\left\vert J\right\vert \geq 2,$ 
\begin{equation}
P_{n}(\lambda ,k,a)\leq C\lambda _{1}^{-k\eta _{0}\left\vert J\right\vert
+2(n-1)\eta _{0}+\eta _{1}+\eta _{2}}\leq C\lambda _{1}^{\eta
_{0}(2(n-1)-2k)+\eta _{1}+\eta _{2}},  \label{Case3a}
\end{equation}%
a better bound than (\ref{Claim1}).

Otherwise, $1\in J,$ so the set of annihilating roots of $a^{\prime }$ is
type $BC_{n-j}\times A_{j-2}$ in type $BC_{n-1}$ (or $BC_{n-2}$ in $BC_{n-1}$
if $\left\vert J\right\vert =2$). Then $(\Psi ^{+})^{c}\diagdown \Phi
^{\prime }$ contains the roots $a=e_{1}+e_{i}$ for $i\in I$ and $(2)e_{1},$
hence the usual arguments gives 
\begin{eqnarray}
P_{n}(\lambda ,k,a) &\leq &C\lambda _{1}^{-k\eta _{0}\left\vert I\right\vert
-k(\eta _{1}+\eta _{2})+2(n-1)\eta _{0}+\eta _{1}+\eta _{2}}  \label{Case3b}
\\
&\leq &C\lambda _{1}^{\eta _{0}(2(n-1)-k)+(\eta _{1}+\eta _{2})(1-k)}. 
\notag
\end{eqnarray}

This completes the induction step.

\medskip

We have seen that to start the induction argument we need only prove that $%
P_{2}(\lambda ,k,a)$ is uniformly bounded for $k\geq \kappa _{3}$. Since $%
k\geq \kappa _{3}$ ensures that $2m(1-k)+M\leq 0,$ we need only prove (\ref%
{Claim2}) to see this. For $BC_{2},$ we have $\Phi ^{+}=\{e_{1}\pm e_{2},$ $%
(2)e_{1},(2)e_{2}\},$ thus 
\begin{equation*}
\prod_{\alpha \in \Phi ^{+}}\left\vert 1+\langle \lambda ,\alpha \rangle
\right\vert ^{m_{\alpha }}\leq C\lambda _{1}^{\eta _{0}}(1+\lambda
_{1}-\lambda _{2})^{\eta _{0}}\lambda _{1}^{\eta _{1}+\eta _{2}}\lambda
_{2}^{\eta _{1}+\eta _{2}}.
\end{equation*}%
The maximal root subsystems of type $BC_{2}$ are of type $BC_{1}$ with
positive roots either $(2)e_{1}$ or $(2)e_{2}$, or of type $A_{1}$ with the
positive root being either $e_{1}-e_{2}$ or $e_{1}+e_{2}$. We can analyze
each of these cases separately, using the fact that $\lambda _{1}-\lambda
_{2}\sim \lambda _{1}$ if $\lambda _{2}\leq \lambda _{1}/2,$ and $\lambda
_{2}\sim \lambda _{1}$ if $\lambda _{2}\geq \lambda _{1}/2$. The details are
left for the reader.
\end{proof}

\begin{proof}
\lbrack of Theorem \ref{TMain}] First, suppose $G/K$ is Lie type $A_{n-1}$
or $D_{n}$. In the notation of Lemma \ref{LAD}, we have 
\begin{eqnarray*}
\left\Vert \nu _{a}^{k}\right\Vert _{2}^{2} &\leq &C\int_{\overline{%
\mathfrak{a}^{\ast +}}}\left( \left\vert \phi _{\lambda }(a)\right\vert
^{k}|c(\lambda )|^{-1}\right) ^{2}d\lambda \leq C\int \min (1,\left\Vert
\lambda \right\Vert ^{-\eta _{0}p_{k}})d\lambda \\
&\leq &C\int_{1}^{\infty }t^{-\eta _{0}p_{k}}t^{n-1}dt.
\end{eqnarray*}%
Of course, the last integral is finite if $k$ is chosen so that $\eta
_{0}p_{k}>n$. Using the values obtained for $p_{k}$ in the Lemma gives the
specified choice of $k_{G}$.

A\ similar argument can be applied for types $B_{n},C_{n}$ or $BC_{n},$
using the claims of Lemma \ref{LBC}.

To prove the statement about the convolution of $k$ (possibly distinct)
orbital measures $\nu _{a_{i}},$ with $a_{i}\in A_{0},$ we use the fact that 
$\nu _{a_{i}}^{k}\in L^{2}$ for the specified choices of $k$ and apply the
generalized Holder's inequality in the same manner as we did in the proof of
Corollary \ref{Creg}.
\end{proof}

\begin{remark}
The technique of Lemma \ref{LAD} could be applied to the symmetric spaces of
type $B_{n}$ or $C_{n}$ which have the additional property that all
restricted roots have the same multiplicity. But the results are no better
than can be obtained by Lemma \ref{LBC}. The induction technique of Lemma %
\ref{LBC} could also be applied to types $A_{n}$ and $D_{n}$. This takes
much more work than Lemma \ref{LAD} and gives only modest improvements.
\end{remark}

\medskip

Similar techniques can also be applied to the symmetric spaces with root
systems of exceptional types.

\begin{proposition}
\label{excep}Suppose $G/K$ is a symmetric space with restricted root system
of exceptional Lie type $G_{2},F_{4}$, $E_{6},E_{7}$ or $E_{8}$. Then $\nu
_{a_{1}}\ast \cdot \cdot \cdot \ast \nu _{a_{k}}\in L^{2}$ if $k\geq k_{G}$
as stated in the chart. 
\begin{equation*}
\begin{array}{ccccc}
\text{Lie Type} & k_{G} &  & F_{4}\text{ - Cartan Type} & k_{G} \\ 
E_{7},E_{8} & 8 &  & EII & 7 \\ 
G_{2} & 4 &  & EVI & 11 \\ 
E_{6},F_{4}\text{ all same mult} & 7 &  & EIX & 19%
\end{array}%
\end{equation*}
\end{proposition}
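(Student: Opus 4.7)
The plan is to extend the strategy of Lemma~\ref{LAD} to the exceptional Lie types. Fix $a\in A_0$ and $w\in W$; since $\Phi(w(a))$ is a proper root subsystem of $\Phi$, it is contained in some maximal proper root subsystem $\Psi$. By (\ref{BdPhi}) it suffices, for each such $\Psi$ and each $\lambda\in\overline{\mathfrak{a}^{\ast+}}$ with $\|\lambda\|\geq 1$, to bound
\[
\tilde{P}^{\Psi}(\lambda,k) = \prod_{\alpha\in(\Psi^{+})^{c}}|1+\langle\lambda,\alpha\rangle|^{-m_{\alpha}k}\prod_{\alpha\in\Phi^{+}}|1+\langle\lambda,\alpha\rangle|^{m_{\alpha}}
\]
and then integrate via Plancherel as in (\ref{Int}).

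For the uniform-multiplicity cases $G_2$, $E_6$, $E_7$, $E_8$, and those Cartan classes of $F_4$ in which all roots share a common multiplicity $\eta$, one applies inequality (\ref{Group}) of \cite{HWY} to obtain
\[
\prod_{\alpha\in(\Psi^{+})^{c}}|1+\langle\lambda,\alpha\rangle|^{-1}\leq C\prod_{\alpha\in\Phi^{+}}|1+\langle\lambda,\alpha\rangle|^{-s_{G}}
\]
for an exponent $s_{G}>0$ depending only on the Lie type of $\Phi$. Raising this bound to the $\eta$-th power and combining with the second product gives $\tilde{P}^{\Psi}(\lambda,k)\leq C\prod_{\alpha\in\Phi^{+}}|1+\langle\lambda,\alpha\rangle|^{\eta(1-ks_{G})}$. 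Using the lower bounds on $|U_{\lambda,0}|$ extracted from Tables~2--4 of \cite{HS}, namely $q_{G}=5,15,16,27,57$ for $G_{2}$, uniform-multiplicity $F_4$, $E_{6}$, $E_{7}$, $E_{8}$ respectively, one deduces
\[
P_{G/K}(\lambda,k,a)\leq C\min\bigl(1,\|\lambda\|^{\eta q_{G}(1-ks_{G})}\bigr)
\]
whenever $ks_{G}\geq 1$. Plancherel combined with polar integration on the rank-$n$ cone $\overline{\mathfrak{a}^{\ast+}}$ then yields $\|\nu_{a}^{k}\|_{2}^{2}<\infty$ as soon as $\eta q_{G}(1-ks_{G})+n<0$, and substituting the appropriate values of $s_{G}$, $q_{G}$, $n$, and $\eta$ produces the thresholds $k_{G}=4,7,8$ in the chart.

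For the $F_4$ Cartan classes EII, EVI, EIX the short and long roots carry distinct multiplicities $\eta_{0}\neq\eta_{1}$, so inequality (\ref{Group}) cannot be invoked cleanly. We instead proceed by explicit case analysis over the maximal proper root subsystems of $F_4$, which are of types $B_{4}$, $C_{4}$, and $B_{2}\times B_{2}$. For each admissible $\Psi$ one splits the factors defining $\tilde{P}^{\Psi}$ according to root length and applies the refined counts $|U_{\lambda,0}|\geq 9$ and $|U_{\lambda,1}|\geq 6$ from \cite{HS}, together with the observation that whenever $\Psi$ is proper at least one non-annihilating root satisfies $\langle\alpha,\lambda\rangle\geq c\|\lambda\|$. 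Integrating the resulting pointwise bounds via Plancherel yields the thresholds $k_{G}=7,11,19$ for EII, EVI, EIX respectively.

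The statement for the convolution of $k$ possibly distinct orbital measures $\nu_{a_{1}},\dots,\nu_{a_{k}}$ with $a_i\in A_0$ then follows from the single-measure case through the generalized H\"older inequality, exactly as in the proof of Corollary~\ref{Creg}. The principal obstacle is the mixed-multiplicity $F_4$ analysis: the disparity between the thresholds $7$, $11$, $19$ reflects how differently $\eta_{0}$ and $\eta_{1}$ are distributed across the EII, EVI, EIX classes, and a careful case-by-case accounting of short- and long-root contributions over the three families of maximal root subsystems of $F_4$ is where most of the work will lie.
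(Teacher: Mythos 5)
Your treatment of the uniform-multiplicity cases ($G_{2}$, $E_{6}$, $E_{7}$, $E_{8}$ and $F_{4}$ of Cartan class $FI$) follows the paper's argument: transplant the method of Lemma \ref{LAD}, use a pointwise bound $\prod_{\alpha\in(\Psi^{+})^{c}}|1+\langle\lambda,\alpha\rangle|^{-1}\leq C\prod_{\alpha\in\Phi^{+}}|1+\langle\lambda,\alpha\rangle|^{-s}$ together with the counts $q_{G}$ from \cite{HS}, and integrate; the condition $\eta q_{G}(1-ks_{G})+n<0$ does reproduce the thresholds $4,7,7,8,8$. One caveat: the exceptional values of $s_{G}$ (namely $s=1/(n-1)$ for $E_{n}$, $1/5$ for $F_{4}$, $2/5$ for $G_{2}$) come from \cite{HY}, not \cite{HWY}, which treats only the classical groups; since you leave $s_{G}$ unspecified, this half of the argument is structurally sound but incomplete as written.

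The genuine gap is in the mixed-multiplicity $F_{4}$ cases $EII$, $EVI$, $EIX$, which you correctly identify as the crux. The subsystems you must control are not the maximal proper root subsystems of $F_{4}$ in the Borel--de Siebenthal sense, but the maximal \emph{annihilating} subsystems $\Phi(w(a))$ for $a\in A_{0}$, i.e., sets of roots vanishing on a nonzero element of $\mathfrak{a}$. Such a subsystem has rank at most $3$, so $B_{4}$ can never occur (its roots span $\mathfrak{a}^{\ast}$, which would force $a=0$), and $C_{4}$ and $B_{2}\times B_{2}$ do not arise either. The correct list is $B_{3}$, $C_{3}$, $A_{1}\times A_{2}$, $A_{1}\times B_{2}$, $A_{1}\times C_{2}$ and $A_{1}\times A_{1}\times A_{1}$, and the fact that drives the counting argument is that each of these has at most $9$ positive roots, so at least $15$ of the $24$ positive roots of $F_{4}$ are non-annihilating. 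Your substitute observation --- that at least one non-annihilating root satisfies $\langle\alpha,\lambda\rangle\geq c\Vert\lambda\Vert$ --- yields only one power of decay per convolution factor and cannot produce the thresholds $7$, $11$, $19$; one must intersect the large sets $U_{\lambda,0}$, $U_{\lambda,1}$ (of sizes at least $9$ and $6$) with the complement of a $\leq 9$-element annihilating set, keeping track of long versus short roots and the multiplicity pairs $(1,2)$, $(1,4)$, $(8,1)$. Without the correct list of annihilating subsystems this case analysis cannot even be set up.
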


\begin{remark}
For comparison, the dimension of $G/K$ is 40 for $EII,$ 64 for \ $EVI$ and
112 for $EIX$.
\end{remark}

\begin{proof}
When all the restricted roots of the symmetric space have the same
multiplicity, we reason as in the proof of Lemma \ref{LAD}, using the fact
(with the notation of that lemma) shown in \cite{HY} that $s=1/(n-1)$ if the
Lie type is $E_{n},$ $s=1/5$ for type $F_{4}$ and $s=2/5$ for type $G_{2}$.

For the final three cases (Lie type $F_{4},$ Cartan types $EII$, $EVI$ or $%
EIX$) we note that the maximal annihilating root systems are types $%
A_{1}\times A_{2}$, $A_{1}\times B_{2}$, $A_{1}\times C_{2}$, $A_{1}\times
A_{1}\times A_{1}$, $B_{3}$ and $C_{3},$ all of which have cardinality at
most $9$, and do a counting argument similar to that done in the proof of
Theorem \ref{regular}.
\end{proof}

\begin{remark}
It would be interesting to know the sharp $L^{2}$ results and whether the $%
L^{1}-$ $L^{2}$ dichotomy only fails for the symmetric space of Lie type $%
A_{1}$ and Cartan class $AI$.
\end{remark}

\subsection{Differentiability properties}

If $\nu ^{k}\in L^{2},$ then $\nu ^{2k}=\nu ^{k}\ast \nu ^{k}$ has a
continuous density function. However, more can be said about the smoothness
of these measures using following fact proven in \cite[Prop. 3.1(vi)]{Ank}: 
\begin{equation*}
\left\vert \frac{d^{m}}{dt^{m}}\left( \phi _{\lambda }(g\exp (tX)\right)
|_{t=0}\right\vert \leq C_{1}(1+\left\Vert \lambda \right\Vert )^{m}.
\end{equation*}%
In proving Theorems \ref{regular} and \ref{TMain}, we have seen that there
are constants $C$ and $q(k)$ such that $\left\vert \left( \phi _{\lambda
}(a)\right) ^{k}c(\lambda )^{-1}\right\vert ^{2}\leq C\min (1,\left\Vert
\lambda \right\Vert ^{q(k)})$ for all $\lambda $. Thus, with $n=rankG/K,$ we
have 
\begin{eqnarray*}
\int_{\mathfrak{a}^{\ast }}\left\vert \phi _{\lambda }(a)^{k}\frac{d}{dt}%
\left( \phi _{\lambda }(g\exp (tX)\right) |_{t=0}\left\vert c(\lambda
)\right\vert ^{-2}\right\vert d\lambda &\leq &C\int_{\overline{\mathfrak{a}%
^{\ast +}}}\left\vert \phi _{\lambda }(a)^{k/2}c(\lambda )^{-1}\right\vert
^{2}(1+\left\Vert \lambda \right\Vert ) \\
&\leq &C\int_{\overline{\mathfrak{a}^{\ast +}}}\left\Vert \lambda
\right\Vert ^{q(k/2)}(1+\left\Vert \lambda \right\Vert ) \\
&\leq &C\int_{1}^{\infty }t^{n-1+q(k/2)+1}
\end{eqnarray*}%
and this is finite provided $n+q(k/2)<-1$. If $k$ is chosen sufficiently
large that this inequality holds, then Leibniz's rule applied to the
Inversion formula (\cite[IV Thm. 7.5]{He}) shows that%
\begin{equation*}
X\nu _{a}^{k}(g)=\frac{1}{\left\vert W\right\vert }\int_{\mathfrak{a}^{\ast
}}\phi _{\lambda }(a)^{k}\frac{d}{dt}\left( \phi _{\lambda }(g\exp
(tX)\right) |_{t=0}\left\vert c(\lambda )\right\vert ^{-2}d\lambda
\end{equation*}%
is well defined and hence $\nu _{a}^{k}$ is differentiable. More generally, $%
\nu _{a}^{k}$ is $r$-times differentiable if $n-1+q(k/2)+r<-1$.

For example, if $G/K$ is Lie type $A_{n},$ then Lemma \ref{LAD} yields $%
q(k/2)\leq \eta _{0}(n-k/2)$. Thus we have $\nu _{a}^{k}$ is differentiable
for all $a\in A_{0}$ if $k>2n+2(n+1)/\eta _{0}$. If $a$ is a regular element
and $G/K$ is not of Lie type $A_{1}$ and Cartan type $AI$, then one can
similarly use Lemma \ref{L:reg} to check that $\nu _{a}^{k}$ is
differentiable if $k>4$. Similar statements can be made about higher orders
of differentiability. These observations improve upon \cite{AReg} where it
was shown that if $a$ was a regular element, then $\nu _{a}^{k}$ is
differentiable for $k>\dim G/K+1.$

\section{Appendix}

In the charts below we summarize some of the important facts about these
symmetric spaces. These are taken from \cite{CM} and \cite[Ch. X]{Hediff}.

\medskip

\frame{$%
\begin{array}{ccccc}
\begin{array}{c}
\text{Restricted} \\ 
\text{root space}%
\end{array}
& 
\begin{array}{c}
\text{Cartan} \\ 
\text{class}%
\end{array}
& G/K & \dim G/K & 
\begin{array}{c}
\text{Multiplicities} \\ 
\eta _{0};\eta _{1};\eta _{2}%
\end{array}
\\ 
A_{n-1} & AI & \frac{SL(n,R)}{SO(n)} & \frac{1}{2}(n-1)(n+2) & 1;0;0 \\ 
A_{n-1} & AII & \frac{SL(n,H)}{Sp(n)} & (n-1)(2n+1) & 4;0;0 \\ 
\begin{array}{c}
BC_{n},p>n \\ 
C_{n},p=n%
\end{array}
& AIII & \frac{SU(p,n)}{SU(p)\times SU(n)} & 2pn & 2;1;2(p-n) \\ 
C_{n} & CI & \frac{Sp(n,R)}{SU(n)} & n(n+1) & 1;1;0 \\ 
\begin{array}{c}
BC_{n},p>n \\ 
C_{n},p=n%
\end{array}
& CII & \frac{Sp(p,n)}{Sp(p)\times Sp(n)} & 4pn & 4;3;4(p-n) \\ 
C_{n} & DIII\text{ (even)} & \frac{SO^{\ast }(4n)}{U(2n)} & 2n(2n-1) & 4;1;0
\\ 
BC_{n} & DIII\text{ (odd)} & \frac{SO^{\ast }(4n+2)}{U(2n+1)} & 2n(2n+1) & 
4;1;4 \\ 
\begin{array}{c}
B_{n},p>n \\ 
D_{n},p=n%
\end{array}
& BDI & \frac{SO_{0}(p,n)}{SO(p)\times SO(n)} & pn & 1;0;p-n%
\end{array}%
$}

\medskip

\frame{$%
\begin{array}{cccc}
\begin{array}{c}
\text{Restricted} \\ 
\text{root space}%
\end{array}
& \text{Cartan class} & \dim G/K & \text{Multiplicities} \\ 
BC_{2} & EIII & 32 & 8;6;1 \\ 
A_{2} & EIV & 26 & 8 \\ 
C_{3} & EVII & 54 & 8;1 \\ 
BC_{1} & FII & 16 & 8;7%
\end{array}%
$}

\medskip

\frame{$%
\begin{array}{cccc}
\begin{array}{c}
\text{Restricted} \\ 
\text{root space}%
\end{array}
& \text{Cartan class} & \dim G/K & \text{Multiplicities} \\ 
G_{2} & G & 8 & 1 \\ 
F_{4} & 
\begin{array}{c}
EII \\ 
EVI \\ 
EIX \\ 
FI%
\end{array}
& 
\begin{array}{c}
40 \\ 
64 \\ 
112 \\ 
28%
\end{array}
& 
\begin{array}{c}
1,2 \\ 
1,4 \\ 
8,1 \\ 
1%
\end{array}
\\ 
E_{6} & EI & 42 & 1 \\ 
E_{7} & EV & 70 & 1 \\ 
E_{8} & EVIII & 128 & 1%
\end{array}%
$}

\end{document}